\newcommand{\matt}[1]{\begin{bmatrix}#1\end{bmatrix}}
\newcommand{\m}{\mathbf{m}}
\def\scr#1{{\mathcal{#1}}}
\newcommand{\R}{\mathbb{R}}
\newcommand{\cF}{\mathcal{F}}
\newcommand{\cN}{\mathcal{N}}
\newcommand{\cC}{\mathcal{C}}
\newcommand{\bN}{\mathbb{N}}
\def\eq#1{\begin{equation}#1\end{equation}}
\def\rep#1{(\ref{#1})}
\newtheorem{theorem}{Theorem}
\newtheorem{lemma}{Lemma}
\newtheorem{proposition}{Proposition}
\newcommand{\dfb}{\stackrel{\Delta}{=}}
\def\qed{ \rule{.08in}{.08in}}
\newenvironment{proof-of}[1]
  {\noindent\textbf{Proof of #1: }}
  {\hspace*{\fill}~\qed\par\endtrivlist\unskip}
\title{\Large \bf A Distributed Algorithm for Computing a Common Fixed Point of a Finite Family of Paracontractions
\thanks{%
This work was supported by National Science Foundation grant n.~1607101.00 and US Air Force grant n.~FA9550-16-1-0290.
Daniel Fullmer and A. Stephen Morse are with the Department of Electrical Engineering, Yale University,~\texttt{\{daniel.fullmer,as.morse\}@yale.edu}.
}}
\author{Daniel Fullmer
  \hspace*{0.5 in}
  A. Stephen Morse
}
\begin{document}

\maketitle
\thispagestyle{empty}
\pagestyle{empty}

\begin{abstract}
A distributed algorithm is described for finding a common fixed point of a family of $m>1$  nonlinear maps
 $M_i:\R^n\rightarrow\R^n$
assuming that each map is a paracontraction and that at least one such common fixed point exists.
The common fixed point is simultaneously computed by $m$ agents
assuming each agent $i$ knows only $M_i$,
the current estimates of the fixed point generated by its neighbors, and nothing more.
Each agent recursively updates its estimate of a fixed point by utilizing the current estimates generated by each of its neighbors.
Neighbor relations are characterized by a time-varying directed graph $\bN(t)$.
It is shown under suitably general conditions on $\bN(t)$,
that the algorithm causes all agent estimates to converge to the same common fixed point of the $m$ nonlinear maps.
\end{abstract}

\section{Introduction}
This paper is concerned with the development of a distributed algorithm for enabling a group of $m>1$
 autonomous agents to solve certain types of nonlinear equations over a time-varying network.
The type of equations to which we are referring are described by the system
\begin{equation}\label{pc-sys}
M_i(x) = x, \ i \in \m
\end{equation}
where $\m \dfb \{1, 2, \dots, m\}$ and $M_i : \R^n \to \R^n$, $i \in\m$.
It is assumed that at least one solution to~(\ref{pc-sys}) exists,
\{i.e., the $M_i$ have at least one common fixed point\}
and that for $i \in \m$, agent $i$ knows $M_i$.
Each agent $i$ has a time dependent state vector $x_i(t)$ taking values in $\R^n$,
which is its estimate of a common fixed point.
It is assumed that each agent can receive information from its neighbors.
Specifically, agent $i$ receives the vector $x_j(t)$ at time $t$ if agent $j$ is a neighbor of agent $i$ at time $t$.
We write $\cN_i(t)$ for set of the labels of agent $i$'s neighbors at time $t$,
and we always take agent $i$ to be a neighbor of itself.
Neighbor relations at time $t$ can be conveniently characterized by a directed \textit{neighbor graph}
$\bN(t)$ with $m$ vertices and a set of arcs defined so that there is
an arc in $\mathbb{N}(t)$ from vertex $j$ to vertex $i$ just in case agent $j$ is a neighbor of agent $i$ at time $t$.
As each agent is a neighbor of itself, the neighbor graph $\bN(t)$ has self-arcs at each vertex.
In general terms, the problem of interest is to develop algorithms, one for each agent,
which will enable all $m$ agents to iteratively compute a common fixed point of all of the $M_i$.
This paper focuses on nonlinear maps which are ``paracontractions.''
A continuous nonlinear map $M:\R^n\rightarrow\R^n$ is a \textit{paracontraction}
with respect to a given norm $\|\cdot\|$ on $\R^n$,
if $\|M(x)-y\| < \|x-y\|$ for all $x\in\R^n$ satisfying $x \neq M(x)$ and all $y \in \R^n$ satisfying $y = M(y)$~\cite{paracontract}.
In most applications, paracontractions have multiple fixed points.
The concept of a paracontraction has been used in a system theoretic framework to study variants of the classical consensus problem~\cite{Xiao2006,Wu2007,Antsaklis2008}.

Motivation for this problem stems, in part, from~\cite{lineareqn}
which deals with the problem of devising a distributed algorithm for finding a solution to the linear equation
$Ax = b$ assuming the equation has at least one solution and agent $i$ knows a pair of the matrices
$(A_i^{n_i\times n},b_i^{n_i\times 1})$
where $A = \matt{A_1' &A_2' &\cdots & A_m'}'$ and $b = \matt{b_1'&b_2'&\cdots & b_m'}'$.
Assuming each $A_i$ has linearly independent rows,
one local update rule for solving this problem in discrete time is of the form
\eq{x_i(t+1) = L_i(\bar{x}_i(t)), \quad t=1,2,3\ldots\label{eqn:linear}}
where $L_i:\R^n\rightarrow\R^n$ is the affine linear map $x\longmapsto x - A_i' (A_i A_i')^{-1} (A_i x - b_i)$,
\eq{\bar{x}_i(t) = \frac{1}{m_i(t)}\sum_{j\in\scr{N}_i(t)} x_j(t) \quad t=1,2,3,\ldots\label{eqn:linear-xhat}}
and $m_i(t)$ is the number of labels in $\scr{N}_i(t)$~\cite{ACC16.1}.
The map $L_i$ is an example of a paracontraction  with respect to the 2-norm on $\R^{n}$.
To understand why this is so, note that $y=L_i(y)$ if and only if $A_i y = b_i$ and for any such $y$,
$L_i(x) - y = P_i(x-y)$ where $P_i$ is the orthogonal projection matrix $P_i = I-A_i'(A_iA_i')^{-1}A_i$.
For any $y$ satisfying $L_i(y)=y$, the inequality $x\neq L_i(x)$ is equivalent
to $x-y\notin\ker A_i$ and $\ker A_i = {\rm image }\;P_i$ so
$x-y\notin{\rm image }\;P_i$ whenever $x\neq L_i(x)$ and $y\in{\rm image }\;P_i$.
But for such $x$ and $y$, $\|P_i(x-y)\|_2<\|x-y\|_2$.
Since $L_i(x) - y = P_i(x - y)$, $L_i$ is a paracontraction as claimed.

There are many other examples of paracontractions discussed in the literature~\cite{paracontract,Byrne2007}.
Each of the following examples is a paracontraction with respect to the 2-norm on $\R^n$.
\begin{enumerate}
    \item The orthogonal projector $x \longmapsto \arg\min_{y \in \cC} \|x - y\|_2$ associated with a nonempty
        closed convex set $\cC$.
        This been used for a number of applications including the constrained consensus problem in \cite{Nedic2010}.
        The fixed points of this map are vectors in $\cC \subset \R^n$.
    \item The gradient descent map $x \longmapsto x - \alpha \nabla f(x)$ where $f :  \R^n \longrightarrow
     \R$ is convex and differentiable, $\nabla f$ is Lipschitz
     continuous with parameter $\lambda>0$, and $\alpha$ is a constant satisfying $0
     < \alpha < \frac{2}{\lambda}$.
     The fixed points of this map are vectors in $\R^n$ which minimize $f$~\cite{Ryu-primer}.
    \item The proximal map $x \longmapsto \arg\min_{y \in \cC} f(y) + \frac{1}{2} \| x - y \|_2$.
        associated with a closed proper convex function $f : \R^n \to (-\infty,\infty]$.
        The fixed points of this map are vectors in $\R^n$ which minimize $f$.
        See \cite{Eckstein1992} as well as \cite{Parikh2014}.
    \item Any `averaged' map, which is defined as a map $M\ :\ \R^n \to \R^n$ which satisfies $M(x) = \alpha N + (1-\alpha) x$ for all $x \in \R^n$,
        where $N$ is some map which is nonexpansive with respect to $\|\cdot\|_2$ and $0 < \alpha < 1$.
        If a map $M$ is averaged with parameter $\alpha$, we say that $M$ is $\alpha$-averaged~\cite{Bauschke2011}.
        In fact, the three examples above are all `averaged' maps~\cite{Ryu-primer}.
\end{enumerate}

\section{Paracontractions}
In this section, we review several basic properties of paracontractions.
Perhaps the most important is the following well-known theorem published in~\cite{paracontract}.
\begin{theorem}\label{elsner}
Suppose $\mathcal P$ is a finite set of paracontractions with respect to some given norm on $\R^n$.
Suppose that all of the paracontractions share at least one common fixed point.
Suppose that $P_1, P_2, \ldots$ is a sequence of paracontractions from $\mathcal P$.
Then the state $x(t)$ of the iteration
$$x(t+1) = P_{t}(x(t)), \quad t\in\{1,2,\ldots\}$$
converges to a point which is a common fixed point of those paracontractions which occur in the sequence infinitely often.
\end{theorem}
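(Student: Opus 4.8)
The plan is to run a Fej\'er-monotonicity argument. Since $\mathcal P$ is finite, only finitely many of its maps occur finitely often in the sequence $P_1,P_2,\dots$; past the last occurrence of each of these, the iteration $x(t+1)=P_t(x(t))$ uses only maps occurring infinitely often, and those maps share every common fixed point of $\mathcal P$. So it suffices to treat the case in which \emph{every} map of $\mathcal P$ occurs infinitely often and to prove that $x(t)$ converges to a common fixed point of all of $\mathcal P$. Now fix a common fixed point $\bar x$. Since each $P_t$ is a paracontraction with $\bar x$ among its fixed points, $\|x(t+1)-\bar x\|=\|P_t(x(t))-\bar x\|\le\|x(t)-\bar x\|$, with equality only when $x(t)$ is itself a fixed point of $P_t$. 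Thus $\|x(t)-\bar x\|$ is nonincreasing, hence converges to some $\mu\ge 0$; the orbit lies in the compact ball $\{z:\|z-\bar x\|\le\|x(1)-\bar x\|\}$, so it has accumulation points, and $\|x(t)-p\|$ converges for every common fixed point $p$. It is also convenient to record the asymptotic-regularity estimate $\|x(t+1)-x(t)\|\to 0$: on that compact ball the function $z\mapsto\|z-\bar x\|-\|M(z)-\bar x\|$ is, for each fixed $M\in\mathcal P$, bounded below by a positive constant on the compact set $\{z:\|M(z)-z\|\ge\varepsilon\}$ (using the strict-decrease property of a paracontraction), so taking the minimum over the finitely many $M$ shows that $\|x(t+1)-x(t)\|\ge\varepsilon$ can hold only when $\|x(t)-\bar x\|-\|x(t+1)-\bar x\|$ exceeds a fixed positive constant, which by telescoping happens for finitely many $t$ only.

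The central step is to exhibit a \emph{single} accumulation point $x^\star$ of $(x(t))$ that is a common fixed point of all of $\mathcal P$. The mechanism I would use is this: if $x(t_k)\to w$ along a subsequence and, after thinning the subsequence (possible since $\mathcal P$ is finite), the applied map $P_{t_k}$ equals one fixed map $M\in\mathcal P$ for all $k$, then $M(x(t_k))=x(t_k+1)\to M(w)$ by continuity, while $\|x(t_k+1)-\bar x\|\to\mu=\|w-\bar x\|$; since $M$ is a paracontraction this forces $M(w)=w$, and consequently $x(t_k+1)\to w$ as well. Iterating this forward, and by the same device backward, and thinning the subsequence at each stage, one produces a point $w$ that is a fixed point of every map occurring in an arbitrarily long block of time steps around the $t_k$'s. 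The main obstacle — the part I expect to be most delicate — is to perform the extraction so that the block eventually sees \emph{all} of the maps at once: a map may occur infinitely often and yet be applied only while the orbit is far from a given accumulation point, so the finiteness of $\mathcal P$ must be exploited more globally. I would handle this by a diagonal/pigeonhole construction that stabilizes the entire pattern of applied maps along a subsequence, re-centering the subsequence on the (infinite) set of times at which a still-missing map is applied and splicing, until the finitely many maps are exhausted; this combinatorial core is essentially the content of the Elsner--Koltracht--Neumann argument.

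Granting such an $x^\star$, the proof closes at once. Because $x^\star$ is a fixed point of every $P_t$, the first step applied with $p=x^\star$ shows that $\|x(t)-x^\star\|$ is nonincreasing and therefore convergent; but $x^\star$ is an accumulation point of the orbit, so some subsequence of $\|x(t)-x^\star\|$ tends to $0$, whence the whole sequence does, i.e.\ $x(t)\to x^\star$. Unwinding the initial reduction, $x^\star$ is a common fixed point of exactly those paracontractions that occur in the sequence infinitely often, which is the assertion of the theorem.
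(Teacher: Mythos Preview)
The paper does not prove this theorem at all: it is quoted as a known result from~\cite{paracontract} (Elsner--Koltracht--Neumann) and then used as a black box throughout the analysis. So there is no ``paper's own proof'' to compare your attempt against.

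That said, your sketch follows the classical EKN line faithfully: Fej\'er monotonicity with respect to the common fixed-point set, boundedness of the orbit, the observation that $\|x(t)-\bar x\|-\|x(t+1)-\bar x\|\to 0$, and then the identification of an accumulation point that is fixed by every map occurring infinitely often. Your asymptotic-regularity paragraph is correct as written (the compactness argument you give does yield $\|x(t+1)-x(t)\|\to 0$), and once an accumulation point $x^\star$ is shown to be a common fixed point, your closing step---monotonicity of $\|x(t)-x^\star\|$ plus a subsequence tending to zero forces full convergence---is the standard and correct finish.

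The part you flag as ``the main obstacle'' is exactly the substantive content of the EKN proof, and your description of it is honest but not yet a proof. The difficulty you identify is real: knowing $x(t_k)\to w$ and that $M$ is applied at some later times $s_k\ge t_k$ does not by itself give $x(s_k)\to w$, so the forward/backward stepping you describe can stall without hitting every map. The clean way through---which is what the original reference does---is to argue by contradiction: if some accumulation point $w$ failed to be fixed by some infinitely-recurring $M$, continuity of $z\mapsto \|z-\bar x\|-\|M(z)-\bar x\|$ would force a uniform positive drop in $\|x(\cdot)-\bar x\|$ each time the orbit is near $w$ and then $M$ is applied; reconciling this with the convergence of $\|x(t)-\bar x\|$ and the finiteness of $\mathcal P$ is where the combinatorics enters. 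Your ``re-centering/diagonal'' plan is a reasonable way to organize that, but it would need to be written out to count as a proof.
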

A number of classical results may be easily established by straightforward application of this theorem,
such as the convergence proof for the method of alternating \{or cyclic\} projections~\cite{paracontract}.

Below, certain useful propositions associated with paracontractions are described.
The proofs of these propositions may be found in the appendix.
Similar propositions can also be found in Chapter~4 of~\cite{Bauschke2011}.
In the following, the set of fixed points of a map $P\ :\ \R^n \to \R^n$ is denoted by $\cF(P) = \{ x\ :\ x = P(x) \}$.
Additionally the composition of two maps $P_1\ :\ \R^n \to \R^n$ and $P_2\ :\ \R^n \to \R^n$ is denoted by $P_1 \circ P_2$.
\begin{proposition}\label{pc-comp}
    Suppose $P_1\ :\ \R^n \to \R^n$ and $P_2\ :\ \R^n \to \R^n$ are each paracontractions with respect to same norm $\|\cdot\|$.
    Suppose $P_1$ and $P_2$ share at least one common fixed point, or in other words, $\cF(P_1) \cap \cF(P_2) \neq \emptyset$.
    Then the composition $P_1 \circ P_2$ is a paracontraction with respect to $\|\cdot\|$.
    Moreover, $\cF(P_1 \circ P_2) = \cF(P_1) \cap \cF(P_2)$.
\end{proposition}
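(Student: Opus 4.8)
The plan is to prove the two claims in sequence, first establishing the fixed-point identity $\cF(P_1 \circ P_2) = \cF(P_1)\cap\cF(P_2)$, and then using it to verify the paracontraction inequality for the composition. Throughout, let $F = \cF(P_1)\cap\cF(P_2)$, which is nonempty by hypothesis.

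For the fixed-point identity, the inclusion $F \subseteq \cF(P_1\circ P_2)$ is immediate: if $x\in F$ then $P_2(x)=x$ and $P_1(x)=x$, so $P_1(P_2(x))=x$. The reverse inclusion $\cF(P_1\circ P_2)\subseteq F$ is the crux of the whole proposition and is where I expect the main work to lie. Suppose $x = P_1(P_2(x))$ but, for contradiction, $x\notin F$. Pick any $y\in F$. The idea is to chain the two paracontraction inequalities through the intermediate point $P_2(x)$: we have $\|P_1(P_2(x)) - y\| \le \|P_2(x) - y\| \le \|x - y\|$, where each inequality is strict whenever the relevant point is not already a fixed point of the relevant map. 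Since $x=P_1(P_2(x))$, the left-hand side equals $\|x-y\|$, forcing both inequalities to be equalities. Equality in the second inequality forces $P_2(x)=x$ (by the contrapositive of the paracontraction property of $P_2$: if $P_2(x)\ne x$ then $\|P_2(x)-y\|<\|x-y\|$ strictly). Then equality in the first inequality reads $\|P_1(x)-y\|=\|x-y\|$, which by the same reasoning for $P_1$ forces $P_1(x)=x$. Hence $x\in F$, a contradiction. So $\cF(P_1\circ P_2)=F$; in particular $\cF(P_1\circ P_2)$ is nonempty.

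With the fixed-point set identified, the paracontraction inequality for $P_1\circ P_2$ follows by essentially the same chaining argument, now tracking strictness more carefully. Let $x\in\R^n$ with $x\neq P_1(P_2(x))$, and let $y\in\cF(P_1\circ P_2)=F$, so $P_1(y)=y$ and $P_2(y)=y$. We want $\|P_1(P_2(x))-y\| < \|x-y\|$. Consider two cases. If $P_2(x)\neq x$, then the paracontraction property of $P_2$ gives $\|P_2(x)-y\|<\|x-y\|$, and the paracontraction property of $P_1$ applied at the point $P_2(x)$ (whether or not $P_2(x)$ is a fixed point of $P_1$, using the nonexpansiveness toward $y\in\cF(P_1)$ that a paracontraction enjoys) gives $\|P_1(P_2(x))-y\| \le \|P_2(x)-y\|$; combining yields the strict inequality. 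If instead $P_2(x)=x$, then since $x\neq P_1(P_2(x)) = P_1(x)$ we have $P_1(x)\neq x$, so the paracontraction property of $P_1$ gives $\|P_1(P_2(x))-y\| = \|P_1(x)-y\| < \|x-y\|$ directly. Either way the strict inequality holds, so $P_1\circ P_2$ is a paracontraction.

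The only subtlety worth flagging is the step ``a paracontraction is nonexpansive toward its fixed points,'' i.e. $\|P(x)-y\|\le\|x-y\|$ for all $x$ and all $y\in\cF(P)$: this is not quite the literal definition (which only asserts the \emph{strict} inequality when $x\neq P(x)$), but it follows trivially by splitting into the cases $x=P(x)$ (equality) and $x\neq P(x)$ (strict inequality). I would state this as a one-line observation before the main argument so that the chaining inequalities above are justified cleanly. Apart from that, the proof is a careful bookkeeping of which inequality is strict in which case, and the composition structure does all the real work.
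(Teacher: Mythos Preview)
Your proposal is correct and follows essentially the same approach as the paper: both chain the two paracontraction inequalities through the intermediate point $P_2(x)$, argue the fixed-point identity by forcing equalities, and then split into the cases $P_2(x)\neq x$ versus $P_2(x)=x$ to obtain strictness for the composition. The only omission is that the paper's definition of a paracontraction includes continuity, so you should add the one-line observation that $P_1\circ P_2$ is continuous as a composition of continuous maps; the paper does this explicitly at the end of its proof.
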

It turns out that the set of fixed points of a paracontraction must be both closed and convex:
\begin{proposition}\label{pc-closed-convex}
Suppose $P\ :\ \mathbb R^n \to \mathbb R^n$ is a paracontraction.
Then $\cF(P) = \{ x\ :\ x = P(x) \}$ is closed and convex.
\end{proposition}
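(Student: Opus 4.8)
The plan is to handle closedness and convexity separately, the first being immediate and the second requiring a short contradiction argument built on the paracontraction inequality together with the triangle inequality.

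For closedness, I would simply note that a paracontraction is by definition continuous, so the map $x \longmapsto P(x) - x$ is continuous, and $\cF(P)$ is the preimage of $\{0\}$ under this map, hence closed.

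For convexity, let $y_1, y_2 \in \cF(P)$ and let $y_\lambda = \lambda y_1 + (1-\lambda) y_2$ for $\lambda \in (0,1)$; the cases $\lambda \in \{0,1\}$ and $y_1 = y_2$ are trivial, so assume $y_1 \neq y_2$. I would argue by contradiction: suppose $y_\lambda \neq P(y_\lambda)$. Applying the defining inequality of a paracontraction with $x = y_\lambda$ and $y = y_1 \in \cF(P)$ gives $\|P(y_\lambda) - y_1\| < \|y_\lambda - y_1\| = (1-\lambda)\|y_1 - y_2\|$, and with $y = y_2 \in \cF(P)$ gives $\|P(y_\lambda) - y_2\| < \|y_\lambda - y_2\| = \lambda\|y_1 - y_2\|$. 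Adding these and using the triangle inequality,
\[
\|y_1 - y_2\| \le \|y_1 - P(y_\lambda)\| + \|P(y_\lambda) - y_2\| < (1-\lambda)\|y_1-y_2\| + \lambda\|y_1-y_2\| = \|y_1-y_2\|,
\]
a contradiction. Hence $y_\lambda = P(y_\lambda)$, i.e. $y_\lambda \in \cF(P)$, so $\cF(P)$ is convex.

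The only place requiring any cleverness is the convexity step, namely recognizing that the two strict inequalities coming from the two fixed points $y_1$ and $y_2$ can be recombined via the triangle inequality to bound $\|y_1 - y_2\|$ strictly by itself; everything else is routine. I do not anticipate any genuine obstacle, though one should be careful to dispose of the degenerate cases $\lambda \in \{0,1\}$ and $y_1 = y_2$ before invoking strictness.
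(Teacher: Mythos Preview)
Your proof is correct and follows essentially the same approach as the paper: the convexity argument is identical in structure (triangle inequality plus the two strict paracontraction inequalities at $y_1$ and $y_2$ to force $\|y_1-y_2\|<\|y_1-y_2\|$), and for closedness you use the preimage-of-$\{0\}$ formulation while the paper uses the equivalent sequential characterization. No gaps.
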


Recall that for a paracontraction $P\ :\ \R^n \to \R^n$,
it must be the case that $\|P(x) - y\| < \|x - y\|$ for all $x \notin \cF(P)$ and all $y \in \cF(P)$.
This property is referred to by a number of different names throughout the literature, such as `strictly quasi-nonexpansive.'
Our previous definition of a paracontraction also requires that the map be continuous.
So, a paracontraction is a continuous, strictly quasi-nonexpansive map.
One obvious consequence of the property above is that
$\|P(x)-y\| \leq \|x-y\|$ for all $x\in\R^n$ and all $y \in \cF(P)$.
Maps which satisfy this condition are called \textit{quasi-nonexpansive}.
So, any map which is a paracontraction must also be quasi-nonexpansive.
This fact will prove useful in the analysis to follow.
\begin{proposition}\label{linear-qne-pc-iff}
    Suppose $P\ :\ \R^n \to \R^n$ is a linear map and $\|\cdot\|$ is some norm on $\R^n$.
    Then, $P$ is quasi-nonexpansive with respect to $\|\cdot\|$
    if and only if it is nonexpansive with respect to $\|\cdot\|$.
    Moreover, $P$ is a paracontraction with respect to $\| \cdot \|$
    if and only if $\| P(x) \| < \| x \|$ for any $x \notin \cF(P)$.
\end{proposition}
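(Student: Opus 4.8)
The plan is to exploit two elementary consequences of linearity. First, any linear map on $\R^n$ is automatically continuous, so the continuity clause in the definition of a paracontraction is free. Second, $0 \in \cF(P)$, since $P(0) = 0$ for every linear $P$; indeed $\cF(P) = \ker(P - I)$ is a linear subspace, hence closed under the translations $x \mapsto x - y$ with $y \in \cF(P)$, and in particular $x \in \cF(P)$ if and only if $x - y \in \cF(P)$ for every $y \in \cF(P)$. These three facts are the only places linearity will be used.

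For the first equivalence, the implication ``nonexpansive $\Rightarrow$ quasi-nonexpansive'' requires no linearity: if $y \in \cF(P)$ then $\|P(x) - y\| = \|P(x) - P(y)\| \le \|x - y\|$ for all $x$. For the converse, assume $P$ is linear and quasi-nonexpansive; taking the fixed point $y = 0$ gives $\|P(x)\| \le \|x\|$ for all $x$, and then for arbitrary $x, x'$ we obtain $\|P(x) - P(x')\| = \|P(x - x')\| \le \|x - x'\|$, so $P$ is nonexpansive.

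For the second equivalence, if $P$ is a paracontraction then setting $y = 0$ in the defining inequality $\|P(x) - y\| < \|x - y\|$ yields $\|P(x)\| < \|x\|$ for every $x \notin \cF(P)$. Conversely, suppose $P$ is linear and $\|P(x)\| < \|x\|$ for all $x \notin \cF(P)$. Continuity being automatic, it remains to check the strict inequality for all $x \notin \cF(P)$ and $y \in \cF(P)$. Set $z = x - y$. Then $z \notin \cF(P)$ by the translation remark, and $P(z) = P(x) - P(y) = P(x) - y$, so applying the hypothesis to $z$ gives $\|P(x) - y\| = \|P(z)\| < \|z\| = \|x - y\|$, which is exactly what is needed.

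There is no substantive obstacle here; the argument is short and elementary. The only thing to be careful about is to invoke linearity precisely where it is needed --- to get $0 \in \cF(P)$, to rewrite $P(x) - y$ as $P(x - y)$ when $y$ is a fixed point, and to use that $\cF(P)$ is a subspace so that ``not a fixed point'' is preserved under subtracting a fixed point --- and to note that one direction of each ``if and only if'' holds for arbitrary (possibly nonlinear) maps.
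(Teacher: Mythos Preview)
Your proof is correct and follows essentially the same approach as the paper's: both equivalences are established by exploiting $0 \in \cF(P)$ together with the identity $P(x) - y = P(x-y)$ for $y \in \cF(P)$. Your version is in fact slightly cleaner, since you make explicit that $\cF(P) = \ker(P - I)$ is a linear subspace, which is precisely why $x - y \notin \cF(P)$ whenever $x \notin \cF(P)$ and $y \in \cF(P)$.
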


\section{The Problem and Main Result}
The specific problem to which this paper is addressed is this.
Let $M_1,M_2,\ldots,M_m$ be a set of $m$ paracontractions with respect to the same norm $\|\cdot\|$.
Suppose that all of the paracontractions share at least one common fixed point.
Find conditions so that the $m$ states of the iterations
\begin{equation}\label{eqn:iter}
x_i(t+1) = M_i \left( \frac{1}{m_{i}(t)} \sum_{j\in\cN_i(t)} x_j(t) \right),\;\;i\in\mathbf{m},\;t\ge 1
\end{equation}
all converge to the same point as $t\rightarrow\infty$,
and that point is a common fixed point of the $M_i,\ i \in \m$,
where $m_i(t)$ and $\cN_i(t)$ are as defined earlier.

To state the main result of this paper, it is necessary to define certain concepts for sequences of directed graphs.
To begin, we write $\mathcal{G}$ for the set of all directed graphs with $m$ vertices.
By the \textit{composition} of two directed graphs $\mathbb{G}_p \in \mathcal{G}$ and $\mathbb{G}_q \in \mathcal{G}$ with the same vertex set,
written $\mathbb{G}_q\circ\mathbb{G}_p$, is meant that directed graph with the same vertex set and arc set defined so that
$(i, j)$ is an arc in the composition whenever there is a vertex $k$ such that
$(i, k)$ is an arc in $\mathbb{G}_p$ and $(k, j)$ is an arc in $\mathbb{G}_q$.
The definition of graph composition extends unambiguously to any finite sequence of directed graphs with the same vertex set.
We say that an infinite sequence of graphs $\mathbb G_1, \mathbb G_2, \dots$ in $\mathcal G$ is \textit{repeatedly jointly strongly connected},
if for some finite positive integers $l$ and $\rho_0$ and each integer $k > 0$,
the composed graph $\mathbb G_{kl + \rho_0 - 1} \circ \mathbb G_{kl + \rho_0 - 2} \circ \cdots \circ \mathbb G_{(k-1)l + \rho_0}$ is strongly connected.
The main result of this paper is as follows:

\begin{theorem}\label{main}
Let $M_1,M_2,\ldots, M_m$ be a set of paracontractions with respect to the $p$-norm $\|\cdot\|_p$ on $\R^n$ (for some $p$ satisfying $1 < p < \infty$).
Suppose the maps $M_1,M_2, \ldots, M_m$ share at least one common fixed point.
Suppose also that the sequence of neighbor graphs $\bN(1), \bN(2),\ldots $ is repeatedly jointly strongly connected.
Then the states $x_i(t)$ of the $m$ iterations defined by \rep{eqn:iter} all converge to the same point as $t\rightarrow\infty$,
and this point is common fixed point of the $M_i,\ i \in \m$.
\end{theorem}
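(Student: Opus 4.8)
Fix an arbitrary common fixed point $z$ of $M_1,\dots,M_m$; keeping $z$ generic will matter at the end. The plan is to track the scalar quantity $V_z(t)\dfb\max_{i\in\m}\|x_i(t)-z\|_p$, show it is non-increasing, then show \emph{separately} that the agents asymptotically agree, and finally combine the two facts to pin down a common limit. For the first part, note that a paracontraction is in particular quasi-nonexpansive, so $\|M_i(v)-z\|_p\le\|v-z\|_p$ for every $v\in\R^n$; and, writing $\bar x_i(t)$ for the average of $\{x_j(t):j\in\cN_i(t)\}$, convexity of $\|\cdot\|_p$ gives $\|\bar x_i(t)-z\|_p\le\max_{j\in\cN_i(t)}\|x_j(t)-z\|_p$. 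Chaining these, $\|x_i(t+1)-z\|_p=\|M_i(\bar x_i(t))-z\|_p\le\max_{j\in\cN_i(t)}\|x_j(t)-z\|_p\le V_z(t)$, so $V_z(t+1)\le V_z(t)$. Hence $V_z(t)\downarrow V_z^*$ for some $V_z^*\ge 0$, and every trajectory $x_i(\cdot)$ stays in the bounded ball of radius $V_z(1)$ about $z$.

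\emph{Consensus (the main obstacle).} I claim $D(t)\dfb\max_{i,j\in\m}\|x_i(t)-x_j(t)\|_p\to 0$; this is where the real work is and the step I expect to be hardest. Suppose not. Using boundedness, pass to a subsequence $t_k\to\infty$ with $x_i(t_k)\to x_i^*$ for each $i$, the $x_i^*$ not all equal. By the repeated-joint-strong-connectivity hypothesis together with the presence of self-arcs at every vertex of every $\bN(t)$, there is a window length $T$ with $\bN(s+T-1)\circ\cdots\circ\bN(s)$ strongly connected for every $s$. Since there are only finitely many digraphs on $m$ vertices, refine the subsequence so that $\bN(t_k+\tau)$, hence the averaging weights used at step $t_k+\tau$, are independent of $k$ for every $\tau\in\{0,\dots,T-1\}$. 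By continuity of the $M_i$ and of averaging, $(x_1(t_k+\tau),\dots,x_m(t_k+\tau))$ then converges for each $\tau$ to a point $(x_1^{(\tau)},\dots,x_m^{(\tau)})$ of a \emph{fixed} iteration, $x_i^{(\tau+1)}=M_i(\bar x_i^{(\tau)})$ with $\bar x_i^{(\tau)}$ the now-fixed weighted average of the $x_j^{(\tau)}$. Because $V_z(t)\to V_z^*$, this limit trajectory satisfies $\max_i\|x_i^{(\tau)}-z\|_p=V_z^*$ \emph{exactly}, for every $\tau\in\{0,\dots,T\}$. This is the point at which $1<p<\infty$ is essential: $\|\cdot\|_p$ is strictly convex, so whenever $\|x_i^{(\tau+1)}-z\|_p$ attains the maximal value $V_z^*$ the chain of inequalities above collapses to equalities, which forces every neighbour of $i$ at time $\tau$ to carry one and the same estimate $\xi$ with $\|\xi-z\|_p=V_z^*$; then $\bar x_i^{(\tau)}=\xi$, and the strict paracontraction inequality for $M_i$ at $z$ gives $M_i(\xi)=\xi$, whence (using the self-arc) $x_i^{(\tau+1)}=x_i^{(\tau)}$ and moreover $i$'s time-$\tau$ neighbours all share $i$'s estimate. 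Propagating this ``frozen and locally constant'' conclusion backward along a directed walk of length $T$ into a vertex that realizes the maximum at time $T$, and invoking strong connectivity of the composed graph over the window, one concludes that all the $x_i^{(\tau)}$ coincide across the whole window (the case $V_z^*=0$ being immediate). This contradicts the $x_i^*$ not all being equal, so $D(t)\to 0$.

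\emph{Identifying the limit.} Let $(x^*,\dots,x^*)$ be any subsequential limit of $(x_1(t),\dots,x_m(t))$ along some $t_k$ — all components equal by consensus. Then $\bar x_i(t_k)\to x^*$ and, by continuity, $x_i(t_k+1)\to M_i(x^*)$; applying consensus at time $t_k+1$ forces $M_1(x^*)=\cdots=M_m(x^*)=:w$. From $\|x_i(t_k)-z\|_p\to\|x^*-z\|_p$ together with $V_z(t_k)\to V_z^*$ we get $\|x^*-z\|_p=V_z^*$, and likewise $\|w-z\|_p=V_z^*$; if $w\ne x^*$, the strict paracontraction inequality for $M_i$ at its fixed point $z$ gives $\|w-z\|_p<\|x^*-z\|_p$, a contradiction, so $M_i(x^*)=x^*$ for every $i$ and $x^*$ is a common fixed point. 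Finally, apply the monotonicity of the first step with $z$ replaced by $x^*$: $\max_i\|x_i(t)-x^*\|_p$ is non-increasing and has a subsequence converging to $0$, hence converges to $0$, so $x_i(t)\to x^*$ for every $i$, as claimed. (One could instead try to close the argument through Theorem~\ref{elsner}, viewing the common trajectory as a perturbed iteration of the $M_i$'s, but the limit-point route above avoids having to control that perturbation.)
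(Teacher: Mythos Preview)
Your argument is correct and follows a genuinely different path from the paper's.

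The paper's proof proceeds by first establishing a structural lemma (\autoref{composed-map-pc}): whenever the product $S(q)\cdots S(1)$ is a positive matrix, the composed map $(S(q)\otimes I)M\circ\cdots\circ(S(1)\otimes I)M$ is itself a paracontraction with respect to $\|\cdot\|_{p,\infty}$, with fixed-point set exactly $\cF(M)\cap\cC$. Repeated joint strong connectivity together with self-arcs guarantees that products over suitably long windows are positive, so the blocked subsequence $z(k)$ is governed by finitely many paracontractions sharing the fixed-point set $\cF(M)\cap\cC$; \autoref{elsner} then gives convergence of $z(k)$, and quasi-nonexpansiveness (\autoref{subsequence-conv}) carries this to the full sequence. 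Strict convexity of $\|\cdot\|_p$ enters through Minkowski's inequality inside \autoref{S-I-pc-infty}.

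Your route never invokes \autoref{elsner}, never shows that any composed map is a paracontraction, and does not isolate the $(S\otimes I)$-part as a separate object. Instead you work directly with the scalar Lyapunov function $V_z(t)=\|x(t)-\bar z\|_{p,\infty}$, prove consensus by a limit-trajectory argument in which strict convexity of $\|\cdot\|_p$ forces equality of all in-neighbours of any maximizing vertex, and then propagate this backward through a strongly connected window; you then identify any subsequential (consensual) limit as a common fixed point by comparing $V_z$ at consecutive times. Your final monotonicity step is exactly the content of the paper's \autoref{subsequence-conv}. Your approach is more self-contained and arguably more elementary, and as you note at the end it sidesteps having to cast the problem as an iterated-paracontraction scheme; the paper's approach, on the other hand, isolates a reusable lemma (the windowed composed map is a paracontraction in its own right) and places the result squarely within the Elsner--Koltracht--Neumann framework. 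Both arguments rely on the weights taking only finitely many values: you use this to freeze the graphs and weights over the window, while the paper uses it so that only finitely many composed maps occur in the blocked iteration.
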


A result similar to \autoref{main} was previously described in~\cite{nolcos-paracontract},
but required each $\bN(t)$, $t \ge 1$ to be strongly connected.
This paper extends that result to sequences of neighbor graphs which are repeatedly jointly strongly connected,
and presents a special case for which the convergence analysis is simple and instructive.

From the analysis which follows, it will be obvious that this result also applies to iterations of the more general form
\begin{equation}\label{eqn:convex-iter}
x_i(t+1) = M_i \left( \sum_{j\in\cN_i(t)} s_{ij}(t) x_j(t) \right),\;\;i\in\mathbf{m},\;t\ge 1
\end{equation}
where $s_{ij}(t)$ are nonnegative real-valued weights from a finite set,
and for each $i \in \m$ and $t \ge 1$,
$\sum_{j\in\cN_i(t)} s_{ij}(t) = 1$
and $s_{ij}(t) > 0$ if $j \in \cN_i(t)$ and $s_{ij}(t) = 0$ if $j \notin \cN_i(t)$.
As will be seen in the sequel,
the analysis which follows depends critically on there being only finitely many such weights.

It is interesting to note that the standard graphical condition for convergence of a consensus process~\cite{reachingp1},
namely `repeatedly jointly rooted,' is \underline{not} sufficient to ensure convergence for the problem considered in this paper.
Consider a simple counterexample in which $m=2$, $M_1$ and $M_2$ are orthogonal projectors onto two convex sets $\cC_1$ and $\cC_2$ which share a common point,
and the neighbor graphs $\bN(1), \bN(2), \ldots$ are all equal and have self arcs for agents 1 and 2 as well as an arc from agent 1 to agent 2.
Each neighbor graph in this sequence is rooted (as defined in~\cite{reachingp1}), and so the sequence is repeatedly jointly rooted.
Suppose the weights $s_{ij}(t)$ are constant $s_{11}(t) = 1$, $s_{12}(t) = 0$, $s_{21}(t) = 1/2$, $s_{22}(t) = 1/2$ for $t \ge 1$.
Suppose further $x_{1}(1) \in \cC_1$ but $x_{1}(1) \notin \cC_2$.
In this case, it is clear from (\ref{eqn:iter}) that $x_1(t+1) = M_1(x_1(t))$, $t \ge 1$,
but since $x_1(t) \in \cC_1$, it follows that $x_1(t+1) = x_1(t)$, $t \ge 1$.
Therefore, $x_1(t)$, $t \ge 1$ is constant and cannot converge to vector in $\cC_2$.
In fact, if the $M_i$ are the affine linear maps discussed in (\ref{eqn:linear})
then the repeatedly jointly strongly connected condition of \autoref{main} is actually necessary for convergence,
provided the convergence is required to be be exponential~\cite{lineareqn}.

The remainder of this paper is devoted to a proof of \autoref{main}.

\section{Analysis}
This section is organized as follows.
First, in \autoref{sec:combining}, the iterations (\ref{eqn:iter}) are written as a single iteration using stacked vectors in $\R^{mn}$.
In \autoref{sec:special}, \autoref{main} is shown under a special case for which the analysis is straightforward,
Finally, in \autoref{sec:general}, \autoref{main} is shown under the general case.

In \autoref{main}, the sequence of neighbor graphs is assumed to be repeatedly jointly strongly connected.
This condition is used in the proofs below to show that products of stochastic matrices meet certain conditions.
For an $m \times m$ matrix $A$ with nonnegative entries, we associate the $m$ vertex directed graph $\gamma(A)$ defined so that $(i,j)$ is an arc from $i$ to $j$ in the graph just in case the $ji$th entry of $A$ is nonzero.
Graph composition and matrix multiplication are closely related.
Indeed, composition is defined so that for any pair of nonnegative $m \times m$ matrices $A_1$, $A_2$, with graphs $\gamma(A_1)$, $\gamma(A_2) \in \mathcal G$, $\gamma(A_2 A_1) = \gamma(A_2) \circ \gamma(A_1)$.
The graph of the product of two nonnegative matrices $A_1,A_2 \in \R^{n\times n}$ is equal to the composition
of the graphs of the two matrices comprising the product.
In other words, $\gamma(A_2 A_1) = \gamma(A_2) \circ \gamma(A_1)$.

\subsection{Combined Iteration}\label{sec:combining}
To proceed, let us note that the family of $m$ iterations given by \rep{eqn:iter} may be written as a single iteration of the form
\eq{x(t+1) = M((S(t) \otimes I) x(t)), \quad t\ge 1\label{eqn:x-iter}}
where for any set of vectors $\{x_i\in\R^n,\;i\in\mathbf{m}\}$, $x\in\R^{mn}$ is the stacked vector
\begin{equation}\label{eqn:x-def}
    x = \matt{x_1 \\ x_2 \\ \vdots \\ x_m}
\end{equation}
$M:\R^{mn}\rightarrow\R^{mn}$ is the map
\begin{equation}\label{eqn:M-def}
    M(x) = \matt{M_1(x_1) \\ M_2(x_2) \\ \vdots \\ M_m(x_m)},
\end{equation}
$S(t)$ is an $m \times m$ stochastic matrix whose $ij$th entry is $s_{ij}(t) = 1/m_{i}(t)$ if $j \in \cN_i(t)$ and $s_{ij}(t) = 0$ if $j \notin \cN_i(t)$,
$I$ is the $n \times n$ identity matrix, and $S(t) \otimes I$ is the Kronecker product of $S(t)$ with $I$.

It is clear from the definition of $M$ that the set of fixed points of $M$ is
$\cF(M) = \{ \matt{x_1' & x_2' & \cdots & x_m'}' : x_i = M_i(x_i),\ i \in \m \}$.
In the sequel, we use $\cC \subset \mathbb R^{mn}$ to denote the \textit{consensus set},
$\cC = \{ \matt{x_1' & x_2' & \cdots & x_m'}' : x_i = x_j,\  i,j \in \mathbf m \}$.
Note that the intersection of these sets is
$\cF(M) \cap \cC = \{ \matt{x_1' & x_2' & \cdots & x_m'}' : x_i = x_j,\ x_i = M_j(x_i),\ i,j \in \m \}$.
In words, if $x$ is a vector in $\cF(M) \cap \cC$, then each of its subvectors are equal and each subvector is a common fixed point of the maps $M_1, M_2, \ldots, M_m$.
The set $\cF(M) \cap \cC$ is nonempty if the maps $M_1, M_2, \ldots, M_m$ share at least one common fixed point.
To prove convergence of the states $x_i(t)$ to the same common fixed point of the $m$ paracontractions,
it suffices to show that $x(t)$ converges to a vector in $\cF(M) \cap \cC$.

The analysis in the sequel will involve the use of \autoref{elsner} with maps which are shown to be paracontractions with respect to a `mixed vector norm', $\|\cdot\|_{p,q}$,
which we define for stacked vectors in $\R^{mn}$ of the form as in (\ref{eqn:x-def}).
For a norm $\|\cdot\|_p$ on $\R^n$ and a norm $\|\cdot\|_q$ on $\R^m$,
the \textit{mixed vector norm} $\|\cdot\|_{p,q}$ on $\R^{mn}$ is defined as follows:
\[ \| x \|_{p,q} = \left\lVert \begin{bmatrix}\|x_1\|_p \\ \|x_2\|_p \\ \vdots \\ \|x_m\|_p \end{bmatrix} \right\rVert_q \]
This is a `norm of norms,' first taking the $p$ norm of each subvector $x_i$, $i \in \m$, and then taking the $q$ norm of a vector consisting of those norms.

\subsection{Special Case}\label{sec:special}
With certain additional but somewhat restrictive assumptions, the proof of \autoref{main} turns out to be a simple application of \autoref{elsner}.
Toward this end, suppose each $M_i,\ i \in \m$ is a paracontraction with respect to $\|\cdot\|_2$
and each matrix $S(t),\ t \ge 1$ is doubly stochastic.
What makes this special case much simpler than the analysis for the general case is the fact that,
with these assumptions,
both $M$ and $S \otimes I$ are paracontractions with respect to $\|\cdot\|_{2,2}$, as shown below.

\begin{proposition}\label{M-pc-2}
    Suppose each map $M_1, \ldots, M_m$ is a paracontraction with respect to $\|\cdot\|_2$.
    Then the map $M$ as defined by (\ref{eqn:M-def}) is a paracontraction with respect to $\|\cdot\|_{2,2}$.
\end{proposition}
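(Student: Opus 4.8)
The plan is to exploit the fact that the mixed vector norm $\|\cdot\|_{2,2}$ is nothing more than the ordinary Euclidean norm on $\R^{mn}$; in particular, for any stacked vector $x$ as in \rep{eqn:x-def}, $\|x\|_{2,2}^2 = \sum_{i\in\m}\|x_i\|_2^2$. This additive splitting of the squared norm over the blocks is what makes the argument go through, so I would state it at the outset.

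First I would dispose of continuity: $M$ is continuous because each $M_i$ is continuous and a stacked map assembled from continuous maps is continuous. It then remains only to verify the defining inequality of a paracontraction, namely $\|M(x)-y\|_{2,2} < \|x-y\|_{2,2}$ whenever $x\neq M(x)$ and $y=M(y)$.

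Next I would unpack the two hypotheses blockwise using the definition \rep{eqn:M-def} of $M$. A vector $y$ satisfies $y=M(y)$ if and only if $y_i\in\cF(M_i)$ for every $i\in\m$, and a vector $x$ satisfies $x\neq M(x)$ if and only if there is at least one index $i_0\in\m$ with $x_{i_0}\neq M_{i_0}(x_{i_0})$. Fixing such an $x$ and such a $y$, I would then bound each block: since every paracontraction is quasi-nonexpansive (as noted just before \autoref{linear-qne-pc-iff}) and $y_i\in\cF(M_i)$, we get $\|M_i(x_i)-y_i\|_2 \le \|x_i-y_i\|_2$ for all $i$; and for the block $i_0$, where $x_{i_0}\neq M_{i_0}(x_{i_0})$, the full paracontraction property of $M_{i_0}$ yields the strict inequality $\|M_{i_0}(x_{i_0})-y_{i_0}\|_2 < \|x_{i_0}-y_{i_0}\|_2$.

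Finally I would square and sum, obtaining $\|M(x)-y\|_{2,2}^2 = \sum_{i\in\m}\|M_i(x_i)-y_i\|_2^2 < \sum_{i\in\m}\|x_i-y_i\|_2^2 = \|x-y\|_{2,2}^2$, where the strictness comes from block $i_0$ combined with the nonstrict bounds on the other blocks; taking square roots gives the claim. I do not anticipate a genuine obstacle here. The only point requiring a little care is keeping the ``already fixed'' blocks (where only equality is available) separate from the block $i_0$ (where strict inequality holds), so that the summed inequality is strict — and it is precisely the additivity of $\|\cdot\|_{2,2}^2$ over the blocks that lets a single strict block force the whole sum to be strict.
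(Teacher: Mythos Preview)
Your proposal is correct and follows essentially the same approach as the paper's own proof: establish continuity of $M$ from continuity of the $M_i$, unpack $x\notin\cF(M)$ and $y\in\cF(M)$ blockwise to get a nonstrict bound on every block and a strict bound on at least one, then sum the squares and take a square root. The paper carries out exactly these steps, differing only in notation (it calls your special index $j$ rather than $i_0$) and in not explicitly remarking that $\|\cdot\|_{2,2}$ coincides with the Euclidean norm on $\R^{mn}$.
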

\begin{proof}
    First, note that $M$ is continuous since each $M_i,\ i \in \m$ is continuous.
    Next, suppose $x \notin \cF(M)$ and $y \in \cF(M)$.
    So $y_i \in \cF(M_i)$ for $i \in \m$, and there is some $j \in \m$ so that $x_j \notin \cF(M_j)$.
    Since each $M_i$, $i \in \m$ is a paracontraction with respect to $\|\cdot\|_2$,
    it follows that $\|M_i(x_i) - y_i\|_2 \le \|x_i - y_i\|_2$ for each $i \in \m$.
    Additionally, $\|M_j(x_j) - y_j\|_2 < \|x_j - y_j\|_2$
    since $x_j \notin \cF(M_j)$, $y_j \in \cF(M_j)$, and $M_j$ is a paracontraction.
    As a result, $\sum_{i \in \m} \|M_i(x_i) - y_i\|_2^2 < \sum_{i \in \m} \|x_i - y_i\|_2^2$.
    Consequently,
    \begin{equation}
        \|M(x) - y\|_{2,2} = \sqrt{\sum_{i\in\m} \|M_i(x_i) - y_i\|_2^2} < \sqrt{\sum_{i\in\m} \|x_i - y_i\|_2^2} = \|x - y\|_{2,2}
    \end{equation}
    Thus, $M$ is a paracontraction with respect to $\|\cdot\|_{2,2}$.
\end{proof}

\begin{proposition}\label{S-I-pc-2}
    Suppose $S = [s_{ij}]$ is an $m \times m$ doubly stochastic matrix with positive diagonal entries.
    Then $S \otimes I$ is a paracontraction with respect to $\|\cdot\|_{2,2}$.
\end{proposition}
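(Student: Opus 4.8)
The plan is to establish two things: that $S \otimes I$ is continuous (trivial, since it is linear), and that $\|(S\otimes I)x\|_{2,2} < \|x\|_{2,2}$ whenever $x \notin \cF(S\otimes I)$. By \autoref{linear-qne-pc-iff}, this second condition is exactly what is needed to conclude that the linear map $S\otimes I$ is a paracontraction with respect to $\|\cdot\|_{2,2}$, once we also know it is quasi-nonexpansive (equivalently, nonexpansive). So the argument splits naturally into a nonexpansiveness step and a strictness step.

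First I would show $\|(S\otimes I)x\|_{2,2} \le \|x\|_{2,2}$ for all $x$. Writing $w = (S\otimes I)x$, the $i$th subvector is $w_i = \sum_j s_{ij} x_j$. By the triangle inequality and $\sum_j s_{ij} = 1$ (row-stochasticity), together with convexity of $t\mapsto t^2$ applied to the convex combination $\|w_i\|_2 \le \sum_j s_{ij}\|x_j\|_2$, one gets $\|w_i\|_2^2 \le \sum_j s_{ij}\|x_j\|_2^2$. Summing over $i$ and using column-stochasticity ($\sum_i s_{ij} = 1$, i.e. the ``doubly'' hypothesis) yields $\sum_i \|w_i\|_2^2 \le \sum_j \|x_j\|_2^2$, that is $\|w\|_{2,2}\le\|x\|_{2,2}$. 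This is the standard argument that a doubly stochastic matrix is nonexpansive in the $2$-norm, lifted through the mixed norm.

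The main obstacle is the strictness step: showing the inequality is strict whenever $x \notin \cF(S\otimes I)$. Here I would first identify $\cF(S\otimes I)$. Since $S$ has positive diagonal entries and — crucially — is doubly stochastic, $1$ is an eigenvalue and (by the structure of doubly stochastic matrices, e.g. $S+S'$ or via $S$ being nonexpansive in $\|\cdot\|_2$ on $\R^m$ so power-bounded) the fixed space of $S$ is spanned by vectors constant on the recurrent classes; in particular $Sv = v$ iff $v$ lies in this subspace. Then $x \notin \cF(S\otimes I)$ means some coordinate-slice of $x$ is not fixed by $S$. To get strictness I track where equality can hold in the chain above: equality in the convexity/Jensen step for subvector $i$ forces all $\|x_j\|_2$ with $s_{ij}>0$ to be equal, and equality in the triangle inequality $\|\sum_j s_{ij}x_j\|_2 = \sum_j s_{ij}\|x_j\|_2$ forces the vectors $\{x_j : s_{ij}>0\}$ to be nonnegative scalar multiples of a common vector; combined with equal norms this forces $x_j = x_k$ for all $j,k$ neighbors of $i$ (here using that $2$ is the only exponent making $\|\cdot\|_2^2$ strictly convex along rays, which is why $p=2$ is needed). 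Propagating this equality through the graph $\gamma(S)$ — which, thanks to positive diagonals, need only be analyzed within each strongly connected piece, and doubly-stochastic forces these pieces to partition $\m$ — shows that equality throughout implies $x$ is constant on each recurrent block, i.e. $x\in\cF(S\otimes I)$, contradiction. Hence strict inequality holds, and by \autoref{linear-qne-pc-iff} the proof is complete.

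I expect the fiddly part to be the equality-analysis of the triangle inequality combined with the Jensen step and its graph-propagation; an alternative, possibly cleaner route is to invoke \autoref{pc-comp}: note $S$ doubly stochastic with positive diagonal can be written (or at least shown equivalent in fixed-point behavior) so that each such $S\otimes I$ is nonexpansive and its only fixed vectors are the consensus-type vectors, then argue directly that strict contraction toward any fixed point holds because a genuine ``spreading'' occurs at some coordinate. Either way, the doubly-stochastic hypothesis is what guarantees nonexpansiveness (Step 1), and the positive-diagonal hypothesis together with the strict convexity of $\|\cdot\|_2^2$ is what upgrades it to a paracontraction (Step 2).
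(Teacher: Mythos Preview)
Your argument is essentially correct but takes a genuinely different route from the paper. The paper's proof is a two-line reduction: it observes that $\|x\|_{2,2}=\|x\|_2$ for every $x\in\R^{mn}$ and that $S\otimes I$ is itself an $mn\times mn$ doubly stochastic matrix with positive diagonal, so the statement follows immediately from \autoref{dbl-stochastic-S-pc}. That lemma, in turn, is proved spectrally: one block-diagonalizes $S$ over its weakly connected components, shows each block $T_i'T_i$ is primitive (hence $S'S$ has eigenvalue $1$ with multiplicity equal to the number of components and all other eigenvalues strictly inside the unit disk), verifies $\cF(S'S)\subset\cF(S)$, and concludes $\|Sx\|_2^2=x'S'Sx<x'x$ whenever $x\notin\cF(S)$. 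Your approach instead works directly with the mixed-norm structure via the triangle inequality, Jensen's inequality for $t\mapsto t^2$, and column-stochasticity, and then extracts strictness by analysing the equality cases and propagating $x_j=x_k$ along the graph $\gamma(S)$. What your route buys is that it is entirely elementary (no Perron--Frobenius, no primitivity), and the same argument transparently extends to $\|\cdot\|_{p,p}$ for any $1<p<\infty$; what the paper's route buys is brevity and a clean separation of concerns, since the identity $\|\cdot\|_{2,2}=\|\cdot\|_2$ collapses the Kronecker structure entirely and leaves a purely $m\times m$ spectral question. One small clean-up: your propagation step only needs that equality forces $x$ to be constant on each weakly connected component of $\gamma(S)$, after which row-stochasticity alone gives $(S\otimes I)x=x$; you do not actually need to characterise $\cF(S\otimes I)$ in advance.
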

\autoref{S-I-pc-2} is a simple consequence of \autoref{dbl-stochastic-S-pc}, which applies to all doubly stochastic matrices with positive diagonal entries.
\begin{lemma}\label{dbl-stochastic-S-pc}
    Suppose $S = [s_{ij}]$ is an $m \times m$ doubly stochastic matrix with positive diagonal entries.
    Then $S$ is a paracontraction with respect to $\|\cdot\|_2$.
\end{lemma}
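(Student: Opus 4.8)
The plan is to reduce the statement to a fact about the ordinary $2$-norm and then exploit strict convexity of the square function. Since $S$ is linear it is in particular continuous, so by \autoref{linear-qne-pc-iff} it suffices to show that $\|Sx\|_2 < \|x\|_2$ for every $x \in \R^m$ with $x \notin \cF(S)$, i.e.\ for every $x$ with $Sx \neq x$. Thus the whole proof comes down to (i) a non-strict norm bound and (ii) an analysis of when that bound is tight.

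First I would establish $\|Sx\|_2 \le \|x\|_2$ for all $x$. Writing $(Sx)_i = \sum_{j} s_{ij} x_j$ and using that for each fixed $i$ the numbers $s_{ij}$, $j\in\m$, are nonnegative and sum to $1$ (row-stochasticity), the Cauchy--Schwarz inequality (equivalently Jensen's inequality for $t \mapsto t^2$) gives $(Sx)_i^2 \le \sum_j s_{ij} x_j^2$, with equality if and only if $x_j$ is the same for all $j$ with $s_{ij} > 0$. Summing over $i$ and then using column-stochasticity, $\sum_i s_{ij} = 1$, yields $\|Sx\|_2^2 = \sum_i (Sx)_i^2 \le \sum_i \sum_j s_{ij} x_j^2 = \sum_j x_j^2 = \|x\|_2^2$.

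The main work, and the only place the positive diagonal hypothesis is used, is the equality analysis. Suppose $\|Sx\|_2 = \|x\|_2$; then the displayed chain must be an equality for every index $i$, so for each $i$ the value $x_j$ is constant over $\{\,j : s_{ij} > 0\,\}$, and since $s_{ii} > 0$ that common value must equal $x_i$. Hence $x_j = x_i$ whenever $s_{ij} > 0$, and therefore $(Sx)_i = \sum_j s_{ij} x_j = x_i \sum_j s_{ij} = x_i$ for every $i$, i.e.\ $Sx = x$. Contrapositively, $x \notin \cF(S)$ forces $\|Sx\|_2 < \|x\|_2$, which is exactly the condition \autoref{linear-qne-pc-iff} demands. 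I expect the subtle point to be precisely this last step: the non-strict bound holds for every doubly stochastic matrix, but strictness off the fixed-point set genuinely requires $s_{ii} > 0$, as the $2\times 2$ permutation matrix shows — it is doubly stochastic and norm-preserving yet not a paracontraction.
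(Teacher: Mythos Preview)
Your argument is correct and in fact cleaner than the paper's. Both proofs reduce via \autoref{linear-qne-pc-iff} to showing $\|Sx\|_2<\|x\|_2$ whenever $Sx\neq x$, but from there they diverge. The paper works spectrally: it block-diagonalises $S$ along the weakly connected components of $\gamma(S)$, argues that each diagonal block $T_i'T_i$ is a primitive doubly stochastic matrix, and concludes via Perron--Frobenius that $S'S$ has eigenvalue~$1$ with multiplicity equal to the number of components and all other eigenvalues strictly inside the unit circle; it then shows $\cF(S'S)\subset\cF(S)$, so $x\notin\cF(S)$ forces $x'S'Sx<x'x$. Your route bypasses all of this structure: the rowwise Jensen bound $(Sx)_i^2\le\sum_j s_{ij}x_j^2$ plus column-stochasticity already gives $\|Sx\|_2\le\|x\|_2$, and the strict convexity of $t\mapsto t^2$ pins down the equality case directly, with the positive-diagonal hypothesis entering exactly once to anchor the constant value on row~$i$ at $x_i$. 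The paper's approach yields extra spectral information about $S'S$ and identifies $\cF(S)$ as block-constant vectors, which you do not need here; your approach is shorter, uses no graph decomposition or Perron--Frobenius, and makes the role of $s_{ii}>0$ completely transparent, as your closing $2\times2$ permutation counterexample underlines.
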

\begin{proof}
    To begin, suppose the graph of $S$, $\gamma(S)$, contains $q \ge 1$ disjoint weakly connected components.
    Let $Q$ be a permutation matrix such that $S = Q(\textrm{diag}\{T_1, \ldots, T_q\}) Q'$,
    where each $T_i,\ i = 1, \ldots, q$ is an $m_i \times m_i$ doubly stochastic matrix with positive diagonal entries and a weakly connected graph,
    and $m_1, \ldots, m_q$ are positive integers such that $\sum_{i=1}^q m_i = m$.

    Note that $S'S = Q(\textrm{diag}\{T_1'T_1, \ldots, T_q'T_q\})Q'$.
    Since each $T_i,\ i = 1, \ldots, q$ is a doubly stochastic matrix with positive diagonal entries,
    each $T_i'T_i,\ i = 1, \ldots, q$ is also a doubly stochastic matrix with positive diagonal entries.
    Additionally, since each $\gamma(T_i),\ i = 1, \ldots, q$ is a weakly connected graph with self-arcs at each vertex,
    and $\gamma(T_i'T_i) = \gamma(T_i') \circ \gamma(T_i)$,
    it follows that each $\gamma(T_i'T_i),\ i = 1, \ldots, q$ is a strongly connected graph and self-arcs at each vertex.
    Thus, each $T_i'T_i,\ i = 1, \ldots, q$ is primitive, and so
    $Q(\textrm{diag}\{T_1'T_1, \ldots, T_q'T_q\})Q'$ must have an eigenvalue at $1$ of multiplicity $q$,
    and all other eigenvalues must have have magnitude less than $1$.
    As $S'S$ is similar to $Q(\textrm{diag}\{T_1'T_1, \ldots, T_q'T_q\})Q'$,
    it must also have an eigenvalue at $1$ of multiplicity $q$,
    and all other eigenvalues must have have magnitude less than $1$.

    Next, we claim that if $x \notin \cF(S)$ then $x \notin \cF(S'S)$.
    Suppose $x \in \cF(S'S)$, or in other words, $S'Sx = x$.
    It follows that $\textrm{diag}\{T_1'T_1, \ldots, T_q'T_q\} Q' x = Q' x$.
    Using the Perron-Frobenius Theorem,
    $Q'x = \begin{bmatrix}c_1 1' & c_2 1' & \cdots & c_q 1'\end{bmatrix}'$,
        where each $c_i,\ i = 1, \ldots, m$ is some real value and the corresponding $1$ is the vector of all ones in $\R^{m_i}$..
    As each $T_i,\ i = 1, \ldots, q$ is a stochastic matrix,
    it follows that $\textrm{diag}\{T_1, \ldots, T_q\} Q' x = Q' x$,
    and therefore, $Sx = x$, or in other words $x \in \cF(S)$.

    Now, suppose $x \notin \cF(S)$.
    From the previous claim, $x \notin \cF(S'S)$.
    Therefore, $S'Sx \neq x$ and so $x$ is not an eigenvector of $S'S$ associated with the eigenvalue 1.
    But recall that all other eigenvalues have magnitude less than 1.
    Consequently, $x'S'Sx < x'x$ as $S'S$ is symmetric.
    Thus, $\|Sx\|_2 < \|x\|_2$ and by \autoref{linear-qne-pc-iff}, $S$ is a paracontraction with respect to $\|\cdot\|_2$.
\end{proof}


\begin{proof-of}{\autoref{S-I-pc-2}}
    From the definition of $\|\cdot\|_{2,2}$ it is not difficult to see that for any vector $x \in \R^{mn}$, $\|x\|_{2,2} = \|x\|_{2}$.
    Additionally, $S \otimes I$ is a doubly stochastic matrix with positive diagonal entries.
    So, this proposition follows directly from \autoref{dbl-stochastic-S-pc}.
\end{proof-of}

With these preceding tools, we may now prove our main result in this special case.
Here, for simplicity, we also assume that each neighbor graph $\bN(t),\ t \ge 1$ is strongly connected.

\begin{proof-of}{\autoref{main} for Special Case}
Each neighbor graph in the sequence $\bN(1), \bN(2), \ldots$ has self arcs at each vertex because each agent is assumed to be a neighbor of itself.
So each matrix $S(t) \otimes I,\ t \ge 1$ must have positive diagonal entries.
From \autoref{M-pc-2} and \autoref{S-I-pc-2},
each $S(t) \otimes I$, $t \ge 1$ and $M$ is a paracontraction with respect to $\|\cdot\|_{2,2}$.
Since there is at least one common fixed point of the maps $M_1, \ldots, M_m$,
the maps $M$ and $S(t) \otimes I$, $t \ge 1$ must share at least one common fixed point.
By \autoref{pc-comp},
each $M \circ (S(t) \otimes I)$, $t \ge 1$ is a paracontraction
and $\cF(M \circ (S(t) \otimes I)) = \cF(M) \cap \cF(S(t) \otimes I)$ for each $t \ge 1$.
Note that there are only a finite number of such composed maps,
since the entries of each $S(t)$, namely $s_{ij}(t)$, may take only a finite number of possible values.
Applying \autoref{elsner} to the iteration defined by (\ref{eqn:x-iter}),
ensures that $x(t)$ will converge to a fixed point in the intersection of the sets of fixed points of those $M \circ (S(t) \otimes I)$ which occur infinitely often.
Since $\cF(M \circ (S(t) \otimes I)) = \cF(M) \cap \cF(S(t) \otimes I)$, the $x(t)$ must converge to a vector which is in $\cF(M)$ and also in $\cF(S(t) \otimes I)$ for those $S(t)$ which occur infinitely often.
However, under the assumption that each each neighbor graph $\bN(t),\ t \ge 1$ is strongly connected,
the graph of each $S(t)$ is strongly connected.
By the Perron-Frobenius Theorem, it follows that $\cF(S(t) \otimes I) = \cC$ for $t \ge 1$.
So, regardless of which particular $S(t)$ occur infinitely often, $x(t)$ must converge to a vector in $\cF(M) \cap \cC$.
\end{proof-of}

It is not difficult to relax the assumption that each neighbor graph $\bN(t),\ t \ge 1$ is strongly connected
to the more general condition that the sequence of neighbor graphs $\bN(1), \bN(t), \ldots$ is repeatedly jointly strongly connected, as in \autoref{main}.
The key step is to prove that the intersection of the sets of fixed points of those $S(t) \otimes I$ which occur infinitely often is just the consensus set, $\cC$.
In fact, this can be shown under a graphical condition even weaker than repeatedly jointly strongly connected.
The definition of repeatedly jointly strongly connected includes a uniformity condition which requires that, for some fixed finite number $l$, each successive composition of $l$ graphs is strongly connected.
Instead, the main result can be shown even if this number $l$ varies with each successive composition of graphs.

While the above proof is a straightforward application of \autoref{main},
it does not generalize to the case in which the $S(t)$ matrices are not doubly stochastic matrices.
Instead, it has proven necessary to reason about composed maps of sufficient length and prove that those maps are paracontractions with the requisite set of fixed points, as discussed in the following section.

\subsection{General Case}\label{sec:general}
There are two differences which make the proof of \autoref{main} more challenging than the analysis previously presented for the special case.
First, while the matrices $S(t)$ are stochastic,
they need not be doubly stochastic,
so \autoref{S-I-pc-2} may not apply.
In order to establish convergence with stochastic matrices without requiring these matrices be doubly stochastic,
it has proven useful to instead focus on the $\infty$-norm, or more precisely $\| \cdot \|_{p,\infty}$.
However, this leads to a second difficulty.
Unlike for the case with $\|\cdot\|_{2,2}$ as shown in \autoref{M-pc-2},
even if each $M_i,\ i \in \m$ is a paracontraction with respect to some norm $\|\cdot\|_p$,
$M$ need not be a paracontraction with respect to $\|\cdot\|_{p,\infty}$.

As an example, consider $m=2$, $x_1 \notin \cF(M_1)$, and $x_2 \in \cF(M_2)$ so that $x \notin \cF(M)$.
Suppose that $y_1 \in \cF(M_1)$ and $y_2 \in \cF(M_2)$ so that $y \in \cF(M)$.
Suppose also that $\|x_1 - y_1\|_p \le \|x_2 - y_2\|_p$,
so that
\begin{equation}\label{eqn:x2-max}
    \max_{i\in\m} \|x_i - y_i\|_p = \|x_2 - y_2\|_p
\end{equation}
Additionally,
$$\|M_1(x_1) - y_1\|_p < \|x_1 - y_1\|_p \le \| x_2 - y_2\|_p$$
since $M_1$ is a paracontraction.
Because $x_2 \in \cF(M_2)$, $x_2 = M_2(x_2)$ and consequently
$\|M_1(x_1) - y_1\|_p < M_2(x_2) - y_2$
From this, it follows that
\begin{equation}\label{eqn:M-x2-max}
    \max_{i\in\m} \|M_i(x_i) - y_i\|_p = \|M_2(x_2) - y_2\|_p
\end{equation}
Using (\ref{eqn:x2-max}), (\ref{eqn:M-x2-max}), and $x_2 = M_2(x_2)$ and it follows that
\begin{equation}
    \max_{i\in\m} \|M_i(x_i) - y_i\|_p = \max_{i \in \m} \| x_i - y_i \|_p
\end{equation}
Thus, $\|M(x) - y\|_{p,\infty} = \|x - y\|_{p,\infty}$.
So, in this case, $M$ is not a paracontraction with respect to $\|\cdot\|_{p,\infty}$.
However, the map $M$ is always quasi-nonexpansive in this norm.
\begin{proposition}\label{M-qne-infty}
    Suppose each $M_1, \ldots, M_m$ is a paracontraction with respect to $\|\cdot\|_p$.
    Let $M$ be the map as defined in~(\ref{eqn:M-def}).
    Then $M$ is quasi-nonexpansive with respect to $\|\cdot\|_{p,\infty}$.
\end{proposition}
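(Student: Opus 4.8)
The plan is to reduce the claim to a fact already recorded in the excerpt: every paracontraction is quasi-nonexpansive with respect to its norm, i.e.\ $\|M_i(x_i)-y_i\|_p\le\|x_i-y_i\|_p$ for \emph{all} $x_i\in\R^n$ (no restriction) and all $y_i\in\cF(M_i)$. First I would fix an arbitrary $x\in\R^{mn}$ and an arbitrary $y\in\cF(M)$, written in stacked form as in \rep{eqn:x-def}. By the description of $\cF(M)$ given in \autoref{sec:combining}, $y\in\cF(M)$ means $y_i\in\cF(M_i)$ for every $i\in\m$.

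Next I would apply quasi-nonexpansiveness of each $M_i$ coordinatewise: since $M_i$ is a paracontraction with respect to $\|\cdot\|_p$ and $y_i\in\cF(M_i)$, we have $\|M_i(x_i)-y_i\|_p\le\|x_i-y_i\|_p$ for each $i\in\m$. Because the $\infty$-norm on $\R^m$ is monotone --- $\max_i a_i\le\max_i b_i$ whenever $a_i\le b_i$ for all $i$ --- taking the maximum over $i$ of both sides preserves the inequality:
\[
\max_{i\in\m}\|M_i(x_i)-y_i\|_p\ \le\ \max_{i\in\m}\|x_i-y_i\|_p .
\]
By the definition of the mixed vector norm with $q=\infty$, the left-hand side is $\|M(x)-y\|_{p,\infty}$ and the right-hand side is $\|x-y\|_{p,\infty}$. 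Since $x$ and $y\in\cF(M)$ were arbitrary, $M$ is quasi-nonexpansive with respect to $\|\cdot\|_{p,\infty}$.

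There is essentially no obstacle here; the only points requiring a little care are (i) that quasi-nonexpansiveness, unlike the strict paracontraction inequality, imposes no condition on $x$, so the argument never needs to split into the cases $x\in\cF(M)$ and $x\notin\cF(M)$, and (ii) the monotonicity of $\max$ just mentioned. This also isolates exactly why only the weaker conclusion survives: in \autoref{M-pc-2} a single strictly decreasing coordinate strictly decreases the sum of squares, whereas it need not decrease the maximum, which is precisely the phenomenon exhibited by the $m=2$ example preceding this proposition.
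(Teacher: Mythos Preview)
Your proof is correct and follows essentially the same approach as the paper: fix $x\in\R^{mn}$ and $y\in\cF(M)$, use quasi-nonexpansiveness of each $M_i$ to get $\|M_i(x_i)-y_i\|_p\le\|x_i-y_i\|_p$, then take the maximum over $i$ and identify both sides with the mixed norm. If anything, your write-up is slightly more careful than the paper's, which contains two evident typos in its concluding lines.
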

\begin{proof}
    Suppose $x \in \R^{mn}$ and $y \in \cF(M)$.
    So $y_i \in \cF(M_i)$ for $i \in \m$.
    Since each $M_i$, $i \in \m$ is a paracontraction with respect to $\|\cdot\|_p$,
    it follows that $\|M_i(x_i) - y_i\|_p \le \|x_i - y_i\|_p$, $i \in \m$.
    As a result,
    \begin{equation}
        \max_{i\in\m} \|M_i(x_i) - y_i\|_p \le \max_{i\in\m} \|x_i - y_i\|_p
    \end{equation}
    and therefore $\|M(x) - y\|_{p,\infty} = \|x - y\|_{p,\infty}$.
    Thus, $M$ is a paracontraction with respect to $\|\cdot\|_{2,2}$.
\end{proof}

A similar result follows for stochastic matrices.
While the following results are stated for matrices of the form $S \otimes I$ with respect to $\| \cdot \|_{p,\infty}$,
by taking the dimension of the identity matrix $I$ to be $1 \times 1$,
these results also apply to general stochastic matrices with respect to $\| \cdot \|_\infty$.
\begin{proposition}\label{S-I-qne-infty}
    Suppose $S = [s_{ij}]$ is an $m \times m$ stochastic matrix.
    Then $S \otimes I$ is quasi-nonexpansive with respect to $\|\cdot\|_{p,\infty}$.
\end{proposition}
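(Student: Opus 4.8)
The plan is to prove the stronger property that $S \otimes I$ is \emph{nonexpansive} with respect to $\|\cdot\|_{p,\infty}$; quasi-nonexpansiveness is then immediate, since for any $y \in \cF(S\otimes I)$ and any $x \in \R^{mn}$ one has $(S\otimes I)y = y$, hence $\|(S\otimes I)x - y\|_{p,\infty} = \|(S\otimes I)x - (S\otimes I)y\|_{p,\infty} \le \|x - y\|_{p,\infty}$.

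To establish nonexpansiveness, I would take an arbitrary $x \in \R^{mn}$ with subvectors $x_1,\dots,x_m \in \R^n$ and note that, by the structure of the Kronecker product, the $i$th subvector of $(S \otimes I)x$ is $\sum_{j\in\m} s_{ij} x_j$. Using the triangle inequality for $\|\cdot\|_p$, the nonnegativity of the entries $s_{ij}$, and the row-sum condition $\sum_{j\in\m} s_{ij} = 1$ (which holds because $S$ is stochastic),
\[
\Bigl\| \sum_{j\in\m} s_{ij} x_j \Bigr\|_p \;\le\; \sum_{j\in\m} s_{ij}\,\|x_j\|_p \;\le\; \Bigl(\sum_{j\in\m} s_{ij}\Bigr)\,\max_{k\in\m}\|x_k\|_p \;=\; \|x\|_{p,\infty}.
\]
Taking the maximum over $i \in \m$ of the left-hand side gives $\|(S\otimes I)x\|_{p,\infty} \le \|x\|_{p,\infty}$, as required, and the deduction of quasi-nonexpansiveness then proceeds as in the first paragraph. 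As noted in the statement, taking $I$ to be $1\times 1$ specializes this to general stochastic matrices with respect to $\|\cdot\|_\infty$.

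I do not expect a genuine obstacle here: the whole argument reduces to the convexity of the norm $\|\cdot\|_p$ together with the fact that each row of $S$ prescribes a convex combination. The only point worth care is that the statement claims \emph{only} quasi-nonexpansiveness and not that $S \otimes I$ is a paracontraction in this mixed norm, which in general it is not — for instance, if $S$ is a non-identity permutation matrix then $S \otimes I$ is an isometry of $\|\cdot\|_{p,\infty}$ with a nontrivial fixed-point set, so the inequality above holds with equality and is never strict.
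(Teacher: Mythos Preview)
Your proposal is correct and follows essentially the same route as the paper: both arguments show $\|(S\otimes I)x\|_{p,\infty}\le\|x\|_{p,\infty}$ via the triangle inequality and the row-stochasticity of $S$, then pass from nonexpansiveness to quasi-nonexpansiveness. The only cosmetic difference is that the paper invokes \autoref{linear-qne-pc-iff} for that last step, whereas you spell out the one-line linearity argument directly; your closing remark on permutation matrices is a nice addition not present in the paper.
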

\begin{proof}
    Suppose $x \in \R^{mn}$.
    By the triangle inequality, $\| \sum_{j \in \m} s_{ij} x_j \|_p \le \sum_{j \in \m} s_{ij} \| x_j \|$ for each $i \in \m$.
    But since $S$ is stochastic, $\sum_{j \in \m} s_{ij} = 1$ for each $i \in \m$.
    Thus,
    $$\|(S \otimes I) x\|_{p,\infty} = \max_{i \in \m} \| \sum_{j \in \m} s_{ij} x_j \|_p \le \max_{i \in \m} \|x_i\|_p = \| x \|_{p,\infty}$$
    Therefore $\|(S \otimes I)x\|_{p,\infty} \le \|x\|_{p,\infty}$ for any $x \in \R^{mn}$, so $S$ is nonexpansive with respect to $\|\cdot\|_{\infty}$,
    By \autoref{linear-qne-pc-iff}, $S \otimes I$ is quasi-nonexpansive with respect to $\|\cdot\|_{p,\infty}$.
\end{proof}
\begin{proposition}\label{S-I-pc-infty}
    Suppose $S = [s_{ij}]$ is an $m \times m$ positive stochastic matrix.
    Then for any real value $p$ satisfying $1<p<\infty$,
    $S \otimes I$ is a paracontraction with respect to $\|\cdot \|_{p,\infty}$.
\end{proposition}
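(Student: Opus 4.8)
The plan is to reduce the claim to a norm estimate via \autoref{linear-qne-pc-iff}, applied to the linear map $S\otimes I$ on $\R^{mn}$ equipped with $\|\cdot\|_{p,\infty}$: it suffices to show that $\|(S\otimes I)x\|_{p,\infty}<\|x\|_{p,\infty}$ for every $x\notin\cF(S\otimes I)$. First I would note that $S\otimes I$ fixes every consensus vector because $S$ is stochastic, so $\cC\subseteq\cF(S\otimes I)$; hence if $x\notin\cF(S\otimes I)$ then $x\notin\cC$, i.e.\ the subvectors $x_1,\ldots,x_m$ of $x$ are not all equal. (In fact $\cF(S\otimes I)=\cC$ since the positive matrix $S$ is primitive, but only this inclusion is needed.) Fix such an $x$, put $R=\|x\|_{p,\infty}=\max_{i\in\m}\|x_i\|_p$, and recall that the $i$th block of $(S\otimes I)x$ is $\sum_{j\in\m}s_{ij}x_j$, which — since $S$ is \emph{positive} — is a convex combination of $x_1,\ldots,x_m$ with all weights strictly positive. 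The goal is to show $\bigl\|\sum_{j\in\m}s_{ij}x_j\bigr\|_p<R$ for every $i$; maximizing over $i$ then gives $\|(S\otimes I)x\|_{p,\infty}<R=\|x\|_{p,\infty}$, which completes the argument.

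The crux — and the only place the hypothesis $1<p<\infty$ enters — is the strict convexity of the $p$-norm. I would use that $u\mapsto\|u\|_p^p=\sum_k |u^{(k)}|^p$ is a \emph{strictly} convex function on $\R^n$, since $t\mapsto|t|^p$ is strictly convex for $p>1$. Jensen's inequality for a strictly convex function, applied with the strictly positive weights $s_{ij}$, then yields $\bigl\|\sum_j s_{ij}x_j\bigr\|_p^p\le\sum_j s_{ij}\|x_j\|_p^p\le\sum_j s_{ij}R^p=R^p$, and the first inequality is an equality only when $x_1=\cdots=x_m$. Because the $x_j$ are not all equal, the inequality is strict, so $\bigl\|\sum_j s_{ij}x_j\bigr\|_p<R$, exactly as required; this is the promised strengthening of the nonstrict bound already established in \autoref{S-I-qne-infty}.

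The one step deserving care is the equality case of Jensen's inequality. I would verify it coordinatewise: if $\bigl\|\sum_j s_{ij}x_j\bigr\|_p^p=\sum_j s_{ij}\|x_j\|_p^p$, then for each coordinate index $k$ one has $\bigl|\sum_j s_{ij}x_j^{(k)}\bigr|^p=\sum_j s_{ij}|x_j^{(k)}|^p$, and strict convexity of $t\mapsto|t|^p$ (valid precisely because $1<p<\infty$) together with $s_{ij}>0$ forces $x_j^{(k)}$ to be independent of $j$; since this holds for every $k$, the $x_j$ all coincide. This strict‑convexity input is the only genuinely nontrivial ingredient — it is exactly what is unavailable when $p=1$ or $p=\infty$ — and the remaining parts of the argument are routine manipulations with the definition of $\|\cdot\|_{p,\infty}$ and the stochasticity of $S$.
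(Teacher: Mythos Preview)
Your proof is correct and takes a genuinely different route from the paper's. The paper fixes indices $i,j$ with $x_i\neq x_j$ and splits into two cases — $x_i$ a scalar multiple of $x_j$ versus not — using the triangle inequality in the first case and the strict form of Minkowski's inequality in the second to obtain the strict bound $\bigl\|\sum_d s_{kd}x_d\bigr\|_p<\|x\|_{p,\infty}$ for each $k$. You instead invoke the strict convexity of $u\mapsto\|u\|_p^p$ on $\R^n$ (equivalently, coordinatewise strict convexity of $t\mapsto|t|^p$) together with Jensen's inequality for the strictly positive convex combination, which handles all cases uniformly and avoids the case split entirely. Both arguments rest on the same underlying fact — strict convexity of the $\ell^p$ unit ball for $1<p<\infty$ — just packaged differently; your version is cleaner and makes the role of the hypothesis $1<p<\infty$ more transparent, while the paper's version has the minor advantage of citing only the familiar Minkowski inequality. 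Your reduction via \autoref{linear-qne-pc-iff} and the observation that $x\notin\cF(S\otimes I)$ forces $x\notin\cC$ are exactly as in the paper.
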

\begin{proof}
Since $S$ is positive, by Perron's Theorem, the set of fixed points of the map $x \longmapsto (S \otimes I)x$ is the consensus set, $\cC$.
Let $x = \matt{x_1'&x_2'&\cdots & x_m'}'$ be
any vector in $\R^{mn}$ which is not a fixed point of $S \otimes I$.
Then there must exist integers $i$ and $j$ such that $x_i\neq x_j$.
Suppose first that $x_i$ is a scalar multiple of $x_j$; i.e. $x_i = \lambda x_j$ for some scalar $\lambda$.
Without loss of generality assume $|\lambda|<1$, so $\|x_i\|_p< \|x_j\|_p$.
Clearly $\|x_i\|_p <\|x\|_{p,\infty}$ and for all $d \in \mathbf m$, $\|x_d\|_p \leq\|x\|_{p,\infty}$
Then for each $k \in \mathbf m$,
$$\begin{array}{rl}
    \left \|\sum_{d\in\mathbf{m}}s_{kd}x_d\right \|_p
    &\leq \sum_{d\in\mathbf{m}}\|s_{kd}x_d\|_p \\
    &= \sum_{d\in\mathbf{m}}s_{kd}\|x_d\|_p
    < \sum_{d\in\mathbf{m}}s_{kd}\|x\|_{p,\infty}.
\end{array}$$
This strict inequality holds because $S$ is positive, which ensures that $s_{ki} > 0$.
But $\sum_{d\in\mathbf{m}}s_{kd} = 1$ because $S$ is stochastic so
\eq{\left \|\sum_{d\in\mathbf{m}}s_{kd}x_d\right \|_p < \|x\|_{p,\infty}, \qquad k\in\mathbf{m}.\label{turk}}

Now suppose that $x_i$ is not a scalar multiple of $x_j$.
Then for each $k \in \mathbf m$, $s_{ki}x_i$ is not a scalar multiple of $s_{kj}x_j$.
By Minkowski's inequality, $\|s_{ki}x_i+s_{kj}x_j\|_p <\|s_{ki}x_i\|_p+\|s_{kj}x_j\|_p$
since $s_{ki}$ and $s_{kj}$ are both positive.
So
\eq{\|s_{ki}x_i+s_{kj}x_j\|_p < s_{ki}\|x_i\|_p+s_{kj}\|x_j\|_p, \quad k \in \mathbf{m}.\label{min}}
By the triangle inequality,
$$\left\|\sum_{d=1} s_{kd}x_d\right \|_p \leq \|s_{ki}x_i+s_{kj}x_j\|_p +\sum_{\substack{d\in\mathbf{m} \\ d \neq i,j}} \|s_{kd}x_d\|_p.$$
Thus using \rep{min},
$$\begin{array}{rl}
    \left \|\sum_{d\in \mathbf{m}} s_{kd}x_d\right\|_p
    &< \sum_{d\in\mathbf{m}}\|s_{kd}x_d\|_p
= \sum_{d\in\mathbf{m}}s_{kd}\|x_d\|_p \\
&\leq \sum_{d\in\mathbf{m}}s_{kd} \|x\|_{p,\infty}=\|x\|_{p,\infty}
\end{array}$$
so \rep{turk} holds for this case as well.
But
$$\|(S\otimes I)x\|_{p,\infty} = \max_{k\in\mathbf{m}}\left \|\sum_{d \in \mathbf{m}} s_{kd}x_d\right\|_p$$
so
\eq{\|(S\otimes I)x\|_{p,\infty}<\|x\|_{p,\infty}.\label{drop}}
So, from \autoref{linear-qne-pc-iff}, $S \otimes I$ is a paracontraction as claimed.
\end{proof}

The previous condition that $S$ be a positive stochastic matrix for $S \otimes I$ to
be a paracontraction is rather strong.
In a certain sense, this is a necessary condition as well.
(See Proposition~3.6 of~\cite{Mojskerc2014} for a related statement characterizing the complex-valued matrices which are paracontractions with respect to $\|\cdot\|_\infty$.)
\begin{proposition}\label{prop:positive-stochastic-sqne}
    Suppose $S = [s_{ij}]$ is an $m \times m$ stochastic matrix and assume that the set of fixed points of $S \otimes I$ is $\cC$.
    If $S \otimes I$ is a paracontraction with respect to $\|\cdot\|_{p,\infty}$, then $S$ is a positive matrix.
\end{proposition}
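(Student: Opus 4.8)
The plan is to argue by contraposition. Assuming $S$ is not positive, I would exhibit an explicit stacked vector $x \in \R^{mn}$ with $x \notin \cC$ for which $\|(S\otimes I)x\|_{p,\infty} = \|x\|_{p,\infty}$. Since $\cF(S\otimes I) = \cC$ by hypothesis, this produces a non‑fixed point at which the strict‑contraction inequality fails, so by the characterization of linear paracontractions in \autoref{linear-qne-pc-iff} the map $S\otimes I$ cannot be a paracontraction with respect to $\|\cdot\|_{p,\infty}$ — contradicting the hypothesis and forcing $S$ to be positive.

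For the construction, suppose $S$ is not positive. Because $S$ is stochastic, every row sums to $1$, so there is a row index $k$ having simultaneously a positive entry $s_{kj}>0$ and a zero entry $s_{k\ell}=0$, with necessarily $j\neq\ell$. Fix any $v\in\R^n$ with $\|v\|_p=1$ and define $x$ block‑wise by $x_d=v$ whenever $s_{kd}>0$ and $x_d=0$ whenever $s_{kd}=0$, for $d\in\m$. Then block $j$ equals $v\neq 0$ while block $\ell$ equals $0$, so $x\notin\cC$, and $\|x\|_{p,\infty}=\max_{d\in\m}\|x_d\|_p=\|v\|_p=1$.

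Finally I would compute the $k$th block of $(S\otimes I)x$: since $\sum_{d\in\m}s_{kd}=1$ while $s_{kd}=0$ exactly on the indices $d$ with $x_d=0$, that block equals $\sum_{d\in\m}s_{kd}x_d=\bigl(\sum_{d:\, s_{kd}>0}s_{kd}\bigr)v=v$, whose $p$‑norm is $1$. Hence $\|(S\otimes I)x\|_{p,\infty}\geq \|v\|_p = 1 = \|x\|_{p,\infty}$, which is the desired failure of strict contraction at $x\notin\cF(S\otimes I)$. I do not anticipate any real obstacle; the only delicate point is guaranteeing $x\notin\cC$, which is precisely why one needs a row that is neither all‑zero (stochasticity gives the positive entry, hence a $v$‑block) nor all‑positive (non‑positivity gives the zero entry, hence a $0$‑block). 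Note that the $k$th output block is computed exactly, with no appeal to the triangle inequality or to $1<p<\infty$, so the statement in fact holds for every $\|\cdot\|_{p,\infty}$, paralleling the cited Proposition~3.6 of~\cite{Mojskerc2014}.
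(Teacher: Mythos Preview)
Your argument is correct and follows essentially the same contrapositive route as the paper: exhibit a non-consensus stacked vector on which $S\otimes I$ fails to strictly decrease the $\|\cdot\|_{p,\infty}$ norm, and invoke \autoref{linear-qne-pc-iff}. The only cosmetic difference is the choice of witness: the paper picks a single zero entry $s_{ik}=0$ and sets the $k$th block to $0$ and \emph{all} other blocks to a unit vector $z$, whereas you tailor the blocks to the full zero-pattern of a chosen row; both constructions make the selected output block equal exactly $v$ (respectively $z$), and both secure $x\notin\cC$ for the same reason you identify.
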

\begin{proof}
Suppose $S \otimes I$ is a paracontraction with respect to $\|\cdot\|_{p,\infty}$ and $\cF(S \otimes I) = \cC$.
Assume, to the contrary, that $S$ is not a positive matrix,
which means there must be indices $i, k \in \mathbf m$ such that $s_{ik} = 0$.
Consider the stacked vector $x \in \R^{mn}$ whose subvectors $x_j,\ j \in \m$ are given by
\[ x_j = \begin{cases} 0 & \textrm{if } j = k \\ z & \textrm{if } j \neq k \end{cases} \]
where $z$ is any vector in $\R^n$ such that $\|z\|_p = 1$.
Note that $x$ is not a fixed point of $S \otimes I$ since not all subvectors are equal, and so $x \notin \cC$.
Now, consider the $i$th subvector of $(S \otimes I)x$,
\[ \sum_{j \in \mathbf m} s_{ij} x_j = \sum_{j \neq k} s_{ij} z = z \sum_{j \in \mathbf m} s_{ij} = z. \]
Therefore,
$$\|(S \otimes I) x\|_{p,\infty} = \max_{i \in \m} \| \sum_{j \in \mathbf m} s_{ij} x_j \|_p = \max_{i \in \m} \| z \|_p = \|z\|_p $$
But, $\|x\|_{\infty} = 1$, so $||Sx||_{\infty} = ||x||_{\infty}$.
However, since $x \notin \cF(S \otimes I)$, from \autoref{linear-qne-pc-iff} this contradicts the assumption that $S \otimes I$ is a paracontraction with respect to $\|\cdot\|_{p,\infty}$.
\end{proof}

One approach to prove the main result would be to require that each $S(t)$ be a positive matrix.
This is far too restrictive as it would correspond to the requirement that each of the neighbor graphs $\bN(t),\ t \ge 1$ be a complete graph.
As will be seen in the sequel,
instead of showing that each individual $S(t) \otimes I$ is a paracontraction,
we instead show that composed maps of sufficient length are paracontractions.
The main technical lemma is as follows:
\begin{lemma}\label{composed-map-pc}
Let $M_i,\;i\in\mathbf{m}$ be a set of $m>1$ paracontractions with respect to the standard $p$ norm
$\|\cdot\|_p$ on $\R^n$ where $p$ is a real value satisfying $1<p<\infty$.
Let $S(1),S(2),\ldots, S(q)$ be a set of $q\geq 1$ $m\times m$ stochastic matrices.
If the $M_i,\;i\in\mathbf{m}$ have a common fixed point and the matrix product $S(q)S(q-1)\cdots S(1)$ is positive,
then the composed map
$\R^{mn}\rightarrow \R^{mn}$, $x \longmapsto ((S(q)\otimes I)M\circ\cdots \circ (S(1)\otimes I)M)(x)$
\begin{enumerate}
\item \label{jel1} is a paracontraction
with respect to the mixed vector norm $\|\cdot\|_{p,\infty}$.
\item \label{jel2} has as its set of fixed points  all stacked vectors of the form $\matt{y' & y'&\cdots & y'}'$
where $y$ is a common fixed point of the $M_i,\;i \in \mathbf{m}$.
In other words, its set of fixed points equals $\cF(M) \cap \cC$.
\end{enumerate}
\end{lemma}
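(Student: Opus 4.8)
Fix once and for all a common fixed point $y$ of $M_1,\ldots,M_m$ (which exists by hypothesis) and set $\mathbf y=\matt{y'&\cdots&y'}'\in\cF(M)\cap\cC$. Write $\Phi$ for the composed map, and record the $2q$ intermediate iterates by $z_0=x$, $w_t=M(z_{t-1})$, $z_t=(S(t)\otimes I)w_t$, so that $\Phi(x)=z_q$. The first thing I would set up is the behaviour of the ``profile vector'' $\phi(v)=\matt{\|v_1-y\|_p&\cdots&\|v_m-y\|_p}'\in\R^m$ under the two kinds of building-block map. By the defining inequality of a paracontraction applied coordinatewise, $\phi(M(v))\le\phi(v)$ entrywise, with strict inequality in coordinate $i$ exactly when $v_i\notin\cF(M_i)$. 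By the triangle inequality and row-stochasticity of $S(t)$, $\phi((S(t)\otimes I)v)\le S(t)\phi(v)$ entrywise. Chaining these along the $2q$ maps and using $S(t)\ge 0$ gives $\phi(\Phi(x))\le P\phi(x)$ entrywise, where $P=S(q)S(q-1)\cdots S(1)$. Since $\|v-\mathbf y\|_{p,\infty}=\|\phi(v)\|_\infty$ and $P$ is row-stochastic, this immediately re-derives quasi-nonexpansiveness of $\Phi$ with respect to $\|\cdot\|_{p,\infty}$; continuity of $\Phi$ is clear, being a composition of continuous maps.

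Both conclusions of the lemma will then follow from the single claim: if $\|\Phi(x)-\mathbf y\|_{p,\infty}=\|x-\mathbf y\|_{p,\infty}$ then $x\in\cF(M)\cap\cC$. Indeed, granting the claim, $\cF(M)\cap\cC\subseteq\cF(\Phi)$ is immediate, the reverse inclusion is the claim applied to fixed points of $\Phi$, and for any $x$ with $x\ne\Phi(x)$ we then have $x\notin\cF(\Phi)$; running the argument with an arbitrary $y'\in\cF(\Phi)$ in place of $\mathbf y$ (legitimate, since every element of $\cF(\Phi)=\cF(M)\cap\cC$ is a common fixed point of all the building-block maps) yields the strict inequality $\|\Phi(x)-y'\|_{p,\infty}<\|x-y'\|_{p,\infty}$, which together with continuity is exactly the statement that $\Phi$ is a paracontraction. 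To prove the claim, put $c=\|x-\mathbf y\|_{p,\infty}$; the case $c=0$ gives $x=\mathbf y$ and we are done, so assume $c>0$. Quasi-nonexpansiveness of each building block forces the chain $c=\|z_0-\mathbf y\|_{p,\infty}\ge\|w_1-\mathbf y\|_{p,\infty}\ge\cdots\ge\|z_q-\mathbf y\|_{p,\infty}=c$ to consist entirely of equalities, so every intermediate iterate lies at $\|\cdot\|_{p,\infty}$-distance exactly $c$ from $\mathbf y$.

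The heart of the argument, and the step I expect to be the main obstacle, is a backward trace from $z_q=\Phi(x)$ down to $z_0=x$. Pick a coordinate $k^*$ with $\|(z_q)_{k^*}-y\|_p=c$. Undoing $S(q)\otimes I$: from $(z_q)_{k^*}-y=\sum_d s_{k^*d}(q)\bigl((w_q)_d-y\bigr)$, each $\|(w_q)_d-y\|_p\le c$, and $\sum_d s_{k^*d}(q)=1$, the value $c$ can be attained only if $\|(w_q)_d-y\|_p=c$ for every $d$ with $s_{k^*d}(q)>0$; and since $1<p<\infty$ makes $\|\cdot\|_p$ strictly convex, equality in the triangle inequality for a convex combination of vectors of equal norm forces all those $(w_q)_d-y$ to be one and the same vector. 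Undoing $M$: if a coordinate $i$ has $\|(w_t)_i-y\|_p=c$, then since $(w_t)_i=M_i((z_{t-1})_i)$ and $\|(z_{t-1})_i-y\|_p\le c$, the paracontraction property of $M_i$ forces $(z_{t-1})_i\in\cF(M_i)$, hence $(w_t)_i=(z_{t-1})_i$, so the distinguished vector is carried unchanged one step back and we additionally learn that this coordinate of $z_{t-1}$ is a fixed point of the corresponding component. Alternating these two reversals down through $t=q,q-1,\ldots,1$, the set of coordinates reached at the $z_0=x$ level is exactly $\{d_0:(P)_{k^*d_0}>0\}$, since composing the supports of $S(q),\ldots,S(1)$ gives precisely the support of the product $P$; as $P$ is positive this set is all of $\m$. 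Hence $x_d-y$ is the same vector for every $d$ (so $x\in\cC$) and $x_d\in\cF(M_d)$ for every $d$ (so $x\in\cF(M)$), proving the claim. The delicate part is the bookkeeping across the two reversal steps — ensuring that ``attains the value $c$'', ``equals the distinguished direction vector'', and ``is a fixed point of the relevant component'' all propagate correctly, and that the $q$ many $S$-reversals match up with the length-$q$ positive chains certified by $P$ being positive; strict convexity of $\|\cdot\|_p$ and the definition of a paracontraction are the only analytic inputs, and everything else is combinatorial.
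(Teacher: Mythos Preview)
Your argument is correct and complete, but it is organized quite differently from the paper's. The paper proceeds \emph{forward}: it first establishes the fixed-point characterization separately (\autoref{class}, using Perron--Frobenius on the irreducible product $\Phi(q,0)$), and then proves the strict inequality by a dichotomy driven by \autoref{phi-ineq}: either some intermediate $M_j(v_j(t))\neq v_j(t)$ along a path reaching $i$ (in which case the paracontraction property of $M_j$ gives the strict drop directly), or else every such intermediate point is fixed, so the composed map collapses to the linear map $(\Phi(q,0)\otimes I)$, whose strict contraction on $\cC^{\,c}$ is supplied by \autoref{S-I-pc-infty}.

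You instead run a single \emph{backward} equality-trace: assuming $\|\Phi(x)-\mathbf y\|_{p,\infty}=\|x-\mathbf y\|_{p,\infty}$, you pin the intermediate $\|\cdot\|_{p,\infty}$-distances at the common value $c$ and then peel off the layers, using strict convexity of $\|\cdot\|_p$ to force equality of all positively-weighted subvectors at each $S$-step and the paracontraction property of each $M_j$ to force fixed points at each $M$-step. Because the support of $S(q)\cdots S(1)$ is full, the trace reaches every coordinate of $x$, yielding $x\in\cF(M)\cap\cC$ and the paracontraction property simultaneously. This is more self-contained --- it avoids the auxiliary Lemmas~\ref{ineq}--\ref{class}, the Perron--Frobenius step, and the case split --- at the cost of being less modular: the paper's \autoref{class}, for instance, is stated under the weaker hypothesis that the product merely have a strongly connected graph, which your trace argument does not directly give. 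Both approaches ultimately rest on the same analytic input (Minkowski equality in $\|\cdot\|_p$ for $1<p<\infty$), invoked by the paper inside \autoref{S-I-pc-infty} and by you directly at each $S$-reversal.
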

The proof of \autoref{composed-map-pc} may be found in the sequel.
In the the proof of \autoref{main} will use \autoref{composed-map-pc} to show that a subsequence converges to a fixed point.
In order to show that the overall sequence also converges to a fixed point, we need one final lemma.
Define $\bar x(t) = (S(t) \otimes I)x(t)$ for each $t \ge 1$.
\begin{lemma}\label{subsequence-conv}
    Suppose $M$ and each $S(t) \otimes I$, $t \ge 1$ is quasi-nonexpansive with respect to some norm $\|\cdot\|$.
    If some subsequence of $\bar x(t)$ converges to $x^* \in \cF(M) \cap \cC$ as $t \to \infty$,
    then $x(t)$ also converges to $x^*$ as $t \to \infty$.
\end{lemma}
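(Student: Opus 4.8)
The plan is to exploit the alternating structure of the iteration \rep{eqn:x-iter}, namely $\bar x(t) = (S(t)\otimes I)x(t)$ and $x(t+1) = M(\bar x(t))$, together with the observation that $x^*$ is a fixed point of \emph{every} map appearing in the iteration. First I would note that since $x^* \in \cF(M)$, the point $x^*$ is fixed by $M$; and since $x^* \in \cC$ and each $S(t)$ is stochastic, the map $S(t)\otimes I$ leaves the consensus set invariant, so $x^* \in \cF(S(t)\otimes I)$ for every $t \ge 1$. Thus $x^*$ is a common fixed point of all the maps whose alternating composition produces the sequences $x(t)$ and $\bar x(t)$. This is the only step where the stochasticity of $S(t)$ (equivalently, unit row sums) is used, and it is the one place worth stating carefully.

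Next I would invoke quasi-nonexpansiveness of $M$ and of each $S(t)\otimes I$ with respect to $\|\cdot\|$ to chain
\[ \|x(t+1) - x^*\| = \|M(\bar x(t)) - x^*\| \le \|\bar x(t) - x^*\| = \|(S(t)\otimes I)x(t) - x^*\| \le \|x(t) - x^*\|, \quad t \ge 1. \]
Hence the nonnegative scalar sequence $\|x(t) - x^*\|$ is non-increasing, and therefore converges to some limit $L \ge 0$; moreover $\|\bar x(t) - x^*\|$, being squeezed between $\|x(t+1) - x^*\|$ and $\|x(t) - x^*\|$, converges to the same limit $L$.

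Finally I would use the hypothesis that some subsequence $\bar x(t_k)$ converges to $x^*$, i.e. $\|\bar x(t_k) - x^*\| \to 0$. Since $\|\bar x(t) - x^*\|$ is already known to converge, its limit must be $L = 0$; consequently $\|x(t) - x^*\| \to 0$, which is exactly the assertion $x(t) \to x^*$. (Equivalently, $\|x(t_k+1) - x^*\| \le \|\bar x(t_k) - x^*\| \to 0$ exhibits a subsequence of the monotone sequence $\|x(t)-x^*\|$ tending to $0$, forcing the whole sequence to do so.) There is no substantial obstacle in this argument — it is a standard Fej\'er-monotonicity-plus-subsequence device — so the write-up is short; the only care needed is the invariance of $\cC$ under $S(t)\otimes I$ noted at the outset.
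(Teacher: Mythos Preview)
Your proposal is correct and follows essentially the same Fej\'er-monotonicity argument as the paper's proof: both derive the chain $\|x(t+1)-x^*\| \le \|\bar x(t)-x^*\| \le \|x(t)-x^*\|$ from quasi-nonexpansiveness of $M$ and $S(t)\otimes I$ together with $x^*\in\cF(M)$ and $x^*\in\cC\subset\cF(S(t)\otimes I)$, and then use the convergent subsequence of $\bar x(t)$ to force the monotone sequence $\|x(t)-x^*\|$ to zero. The only cosmetic difference is that you phrase the last step via ``the monotone sequence has a limit $L$, the squeezed sequence $\|\bar x(t)-x^*\|$ has the same limit, and the subsequence forces $L=0$,'' whereas the paper writes the explicit inequality $\|x(t)-x^*\|\le\|\bar x(\rho_k)-x^*\|$ for $t>\rho_k$; these are equivalent formulations of the same device.
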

\begin{proof}
From (\ref{eqn:x-iter}),
$\| x(t+1) - x^* \| = \| M((S \otimes I) x(t)) - x^* \|$ for any $t \ge 1$.
Since $M$ is quasi-nonexpansive and $x^* \in \cF(M)$,
$ \| M((S \otimes I) x(t)) - x^* \| \le \| (S \otimes I)x(t) - x^* \| $ for any $t \ge 1$.
Since $S(t) \otimes I$ is quasi-nonexpansive and $x^* \in \cC \subset \cF(S(t) \otimes I)$,
$ \| (S \otimes I)x(t) - x^* \| \le \| x(t) - x^* \| $ for any $t \ge 1$.
In summary, for any $t \ge 1$,
\begin{equation}\label{eqn:x-fejer}
    \begin{aligned}
        \| x(t+1) - x^* \| &= \| M((S(t) \otimes I) x(t)) - x^* \| \\
                           &\le \| (S(t) \otimes I) x(t) - x^* \| \\
                           &\le \| x(t) - x^* \|
    \end{aligned}
\end{equation}

Let $\bar x(\rho_0), \bar x(\rho_1), \bar x(\rho_2), \ldots$ be a subsequence of $\bar x(t)$, $t \ge 1$
which converges to $x^* \in \cF(M) \cap \cC$.
From (\ref{eqn:x-fejer}), $\| x(t) - x^* \| \le \| x(\rho_k + 1) - x^* \|$ for any $k \ge 0$ and $t > \rho_k$.
Additionally, since $\bar x(\rho_k) = (S(\rho_k) \otimes I) x(\rho_k)$, it is also true from (\ref{eqn:x-fejer}) that $\| x(\rho_k + 1) - x^* \| \le \|  \bar x(\rho_k) - x^* \|$.
Thus $\| x(t) - x^* \| \le \| \bar x(\rho_k) - x^* \|$ for any $k \ge 0$ and $t > \rho_k$.
As a result, if $\lim_{k \to \infty} \|\bar x(\rho_k) - x^*\| = 0$ then $\lim_{t \to \infty} \|x(t) - x^* \| = 0$.
So, if the subsequence $\bar x(\rho_k),\ k \ge 0$ converges to some vector $x^* \in \cF(M) \cap \cC$, then the sequence $x(t)$ converges to this same vector.
\end{proof}

With the above results, it is now possible to prove our main result.

\begin{proof-of}{\autoref{main}}
Each neighbor graph in the sequence $\bN(1), \bN(2), \ldots$ has self arcs at each vertex because each agent is assumed to be a neighbor of itself.
By assumption, the sequence of neighbor graphs $\bN(1), \bN(2), \ldots$ is repeatedly jointly strongly connected,
so for some finite positive integers $l$ and $\rho_0$ and each integer $k > 0$,
the composed graph $\bN(kl + \rho_0 - 1) \circ \bN(kl + \rho_0 - 2) \circ \cdots \circ \bN((k-1)l + \rho_0)$ is strongly connected.
It is known that the composition of $q \dfb m-1$ self-arced, strongly connected graphs must be complete
\{c.f., Proposition 4 of \cite{reachingp1}\}.
Consequently, for each integer $k > 0$,
the composed graph $\bN(kql + \rho_0 - 1) \circ \bN(kql + \rho_0 - 2) \circ \cdots \circ \bN((k-1)ql + \rho_0)$ is complete.
Thus for each $k > 0$, the matrix $S(kql + \rho_0 - 1) S(kql + \rho_0 - 2) \cdots S((k-1)ql + \rho_0)$ is positive.

Define $z(k) = \bar x((k-1)ql + \rho_0 - 1)$ for each $k \ge 2$, so $z(k)$, $k \ge 1$ is a subsequence of $\bar x(t)$, $t \ge 1$.
Note that $z(k)$ is not defined for $k=1$, as that would imply $z(1) = \bar x(0)$ if $\rho_0 = 1$,
and $\bar x(0)$ is not defined.
From the definition of $\bar x(t)$ and $z(k)$, it follows that
\begin{equation}\label{eqn:z-iter}
    \begin{aligned}
        z(k+1) =& ((S(kql + \rho_0 - 1) \otimes I) M \circ \cdots \\
                & \quad \cdots \circ (S((k-1)ql + \rho_0) \otimes I) M)(z(k))
\end{aligned}
\end{equation}
for each $k \ge 2$.
It follows from Assertion~\ref{jel1} of \autoref{composed-map-pc} that the maps
$x \longmapsto ((S(kql + \rho_0 - 1)\otimes I)M\circ\cdots \circ (S((k-1)ql + \rho_0)\otimes I)M)(x)$, $k \ge 2$
are all paracontractions with respect to the mixed vector norm $\|\cdot \|_{p,\infty}$.
Note that there are only finitely many such maps,
since each map is a finite length composition,
and the entries of each $S(t)$, namely $s_{ij}(t)$, may take only a finite number of possible values.
Furthermore it is clear from Assertion~\ref{jel2} of \autoref{composed-map-pc},
that the set of fixed points of each map is $\cF(M) \cap \cC$.
It is clear from \autoref{elsner} and (\ref{eqn:z-iter})
that $z(k), k \ge 2$ must converge to such a fixed point $x^* \in \cF(M) \cap \cC$.
Since $z(k),\ k \ge 2$ is a subsequence of $\bar x(t),\ t \ge 1$,
using \autoref{M-qne-infty}, \autoref{S-I-qne-infty}, and \autoref{subsequence-conv}
it follows that $x(t)$ must also converge to this same vector $x^*$ in $\cF(M) \cap \cC$.
\end{proof-of}

In the sequel we develop the technical results needed to prove \autoref{composed-map-pc}.
In the proofs below, we will make use of the matrix
$\Phi(t,\tau) = \matt{\phi_{ij}(t,\tau)}_{m \times m}$ which we define as
$\Phi(t,\tau) = S(t)S(t-1)\cdots S(\tau+1)$ for $0 \le \tau < t \le q$
and 
$\Phi(t,t) = I$ for $0 \le t \le q$.
Note that
$S(t)\Phi(t-1,\tau) = \Phi(t,\tau) = \Phi(t,\tau+1)S(\tau+1),\;0\leq \tau<t\leq q$.
For each $i\in\mathbf{m}$, let $v_i(0)\in\R^n$ be an arbitrary but fixed vector, and define
\begin{equation}
  v_i(t+1) = \sum_{j\in\mathbf{m}}s_{ij}(t+1)M_j(v_j(t)),\quad 0 \le t < q.\label{v-defn}
\end{equation}
 We shall need the following lemmas.

\begin{lemma}\label{ineq}
Let $y^*$ be a common fixed point of the $M_i,\;i\in\mathbf{m}$.
For each $i\in\mathbf{m}$
\eq{\|v_i(t)-y^*\| \leq \sum_{j\in\mathbf{m}}\phi_{ij}(t,\tau) \|v_j(\tau) - y^*\|, \label{br1}}
for $0\leq \tau\leq t\leq q$.
\end{lemma}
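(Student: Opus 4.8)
The plan is to prove \autoref{ineq} by induction on $t$ with $\tau$ held fixed, using three ingredients: the stochasticity of each $S(t)$, the quasi-nonexpansiveness of each $M_j$ relative to the shared fixed point $y^*$, and the cocycle identity $S(t+1)\Phi(t,\tau) = \Phi(t+1,\tau)$ recorded just before \rep{v-defn}.

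For the base case $t = \tau$, the matrix $\Phi(\tau,\tau)$ equals $I$, so $\phi_{ij}(\tau,\tau)$ is $1$ when $i=j$ and $0$ otherwise, and the claimed inequality reduces to the trivial identity $\|v_i(\tau)-y^*\| \le \|v_i(\tau)-y^*\|$.

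For the inductive step I would assume the inequality holds at time $t$ for every $i \in \m$ and prove it at time $t+1$. Starting from the definition \rep{v-defn} of $v_i(t+1)$ and using that each row of $S(t+1)$ sums to one, write $v_i(t+1)-y^* = \sum_{j\in\m} s_{ij}(t+1)\bigl(M_j(v_j(t))-y^*\bigr)$; the triangle inequality together with nonnegativity of the $s_{ij}(t+1)$ gives $\|v_i(t+1)-y^*\| \le \sum_{j\in\m} s_{ij}(t+1)\|M_j(v_j(t))-y^*\|$. Since each $M_j$ is a paracontraction it is in particular quasi-nonexpansive, and $y^* \in \cF(M_j)$, so $\|M_j(v_j(t))-y^*\| \le \|v_j(t)-y^*\|$. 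Substituting the inductive hypothesis for each $\|v_j(t)-y^*\|$ and interchanging the order of summation yields $\|v_i(t+1)-y^*\| \le \sum_{k\in\m}\bigl(\sum_{j\in\m} s_{ij}(t+1)\phi_{jk}(t,\tau)\bigr)\|v_k(\tau)-y^*\|$, and the inner sum is exactly the $ik$ entry of $S(t+1)\Phi(t,\tau)=\Phi(t+1,\tau)$, namely $\phi_{ik}(t+1,\tau)$. This closes the induction. One point needing a moment's care is that substituting the inductive bound into the weighted sum preserves the inequality only because both the weights $s_{ij}(t+1)$ and the transition entries $\phi_{jk}(t,\tau)$ are nonnegative — the latter because $\Phi(t,\tau)$ is a product of stochastic matrices.

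I do not expect any genuine obstacle here: the only analytic input is the quasi-nonexpansiveness of the $M_j$, which is noted in the discussion preceding \autoref{linear-qne-pc-iff}, and everything else is bookkeeping with stochastic matrices. If there is a mildly delicate step it is simply keeping the direction of all inequalities straight through the double summation, which is handled by the nonnegativity remarks above.
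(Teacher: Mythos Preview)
Your proposal is correct and matches the paper's proof essentially line for line: both fix $\tau$, handle the base case $t=\tau$ via $\Phi(\tau,\tau)=I$, and for the inductive step combine the triangle inequality on \rep{v-defn}, the paracontraction (quasi-nonexpansive) bound $\|M_j(v_j(t))-y^*\|\le\|v_j(t)-y^*\|$, the inductive hypothesis, and the identity $\sum_{j}s_{ij}(t+1)\phi_{jk}(t,\tau)=\phi_{ik}(t+1,\tau)$. Your explicit remark about nonnegativity of the weights is a point the paper leaves implicit but is indeed what justifies the substitution.
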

\begin{proof}
Fix $0 \le \tau \le q$.
If $t = \tau$, then \rep{br1} holds for each $i\in\mathbf{m}$ since $\phi_{ij}(t,\tau) = 1$ whenever $i=j$ and $\phi_{ij}(t,\tau) = 0$ whenever $i \neq j$.

Suppose $t > \tau$ and \rep{br1} holds for some $t = \mu$ satisfying $\tau \le \mu < q$,
\eq{\|v_i(\mu)-y^*\| \leq \sum_{j\in\mathbf{m}}\phi_{ij}(\mu,\tau) \|v_j(\tau) - y^*\|,\quad i \in \mathbf{m}. \label{lem1-1}}
From \rep{v-defn} and the triangle inequality it follows that
$\|v_i(\mu+1)-y^*\|\leq \sum_{j\in\mathbf{m}}s_{ij}(\mu+1)\|M_j(v_j(\mu))-y^*\|$.
But the $M_i$ are paracontractions, so
\eq{\|v_i(\mu+1)-y^*\| \leq \sum_{j\in\mathbf{m}}s_{ij}(\mu+1)\|v_j(\mu)-y^*\|,\;i \in \mathbf{m}. \label{lem1-2}}
From \rep{lem1-1} and \rep{lem1-2}, it follows that
\begin{align*}
    \|v_i(\mu+1) -  y^*\| \!
    & \leq \! \sum_{k\in\mathbf{m}} s_{ik}(\mu+1) \! \sum_{j\in\mathbf{m}} \! \phi_{kj}(\mu,\tau)\|v_j(\tau)-y^*\| \\
    & = \! \sum_{j\in\mathbf{m}} \sum_{k\in\mathbf{m}} \! s_{ik}(\mu+1) \phi_{kj}(\mu,\tau) \|v_j(\tau)-y^*\|
\end{align*}
for each $i \in \mathbf m$.
But $\phi_{ij}(\mu+1,\tau) = \sum_{k\in\mathbf{m}} s_{ik}(\mu+1) \phi_{kj}(\mu, \tau)$ by the definition of $\Phi$,
so
$$\|v_i(\mu+1)-y^*\|\leq\sum_{j\in\mathbf{m}}\phi_{ij}(\mu+1,\tau) \|v_j(\tau)-y^*\|, \ i \in \mathbf m$$
which shows that \rep{br1} holds for $t=\mu+1$.
By induction, \rep{br1} holds for any $t$ satisfying $\tau < t \le q$.
Since $\tau$ was initially fixed, \rep{br1} holds for any $0 \le \tau \le t \le q$.
\end{proof}

\begin{lemma}\label{phi-ineq}
Let $y^*$ be a common fixed point of the $M_i,\; i\in\mathbf{m}$.
Then for each $i\in\mathbf{m}$,
\eq{\|v_i(q)-y^*\|\leq \sum_{j\in\mathbf{m}}\phi_{ij}(q,0)\|v_j(0)-y^*\|\label{gum1}}
and the following statements are true.
\begin{enumerate}
\item\label{gum2}
If there is a $t$ satisfying $0 \le t < q$ and a $j\in\mathbf{m}$
for which $\phi_{ij}(q,t)>0$ and $M_j(v_j(t))\neq v_j(t)$, then
\eq{\|v_i(q)-y^*\|< \sum_{p\in\mathbf{m}}\phi_{ip}(q,0)\|v_p(0)-y^*\|.\label{pup}}
\item\label{gum3}
If for every $t$ satisfying $0 \le t < q$ and $j\in\mathbf{m}$
it is true that $M_j(v_j(t)) = v_j(t)$
whenever $\phi_{ij}(q,t)>0$,
then
\eq{v_i(q)= \sum_{p\in\mathbf{m}}\phi_{ip}(q,0)v_p(0).\label{boop}}
\end{enumerate}
\end{lemma}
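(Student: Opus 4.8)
The plan is to obtain \rep{gum1} directly from \autoref{ineq} and then derive Assertions~\ref{gum2} and~\ref{gum3} as sharpenings of the same telescoping argument, one tracking where a strict inequality enters and the other tracking an exact equality. Indeed \rep{gum1} is nothing but \rep{br1} with $\tau = 0$ and $t = q$, so no further work is needed there.

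For Assertion~\ref{gum2}, fix the time $t$ and index $j$ with $\phi_{ij}(q,t) > 0$ and $M_j(v_j(t)) \neq v_j(t)$. Since $M_j$ is a paracontraction and $y^*$ is one of its fixed points, $\|M_j(v_j(t)) - y^*\| < \|v_j(t) - y^*\|$, and the whole task is to propagate this one strict step through the composition without it being washed out. Using $\Phi(q,t) = \Phi(q,t+1)S(t+1)$, write $\phi_{ij}(q,t) = \sum_k \phi_{ik}(q,t+1)\, s_{kj}(t+1)$; since the left side is positive there is an index $k$, call it $k^*$, with $\phi_{ik^*}(q,t+1) > 0$ and $s_{k^*j}(t+1) > 0$. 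Expanding $v_{k^*}(t+1) - y^* = \sum_{j'} s_{k^*j'}(t+1)\bigl(M_{j'}(v_{j'}(t)) - y^*\bigr)$, the triangle inequality together with quasi-nonexpansiveness of the $M_{j'}$ — but with the strict bound just noted for the $j'=j$ term, whose coefficient $s_{k^*j}(t+1)$ is positive — gives $\|v_{k^*}(t+1) - y^*\| < \sum_{j'} s_{k^*j'}(t+1)\|v_{j'}(t) - y^*\|$. Combining this, via \rep{br1} on the interval $[t+1,q]$, namely $\|v_i(q) - y^*\| \le \sum_k \phi_{ik}(q,t+1)\|v_k(t+1) - y^*\|$, and using that the coefficient $\phi_{ik^*}(q,t+1)$ of the strict term is positive, yields $\|v_i(q)-y^*\| < \sum_{j'}\phi_{ij'}(q,t)\|v_{j'}(t)-y^*\|$ after collapsing $\sum_k \phi_{ik}(q,t+1)s_{kj'}(t+1) = \phi_{ij'}(q,t)$. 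Applying \rep{br1} once more on $[0,t]$ and using $\Phi(q,t)\Phi(t,0) = \Phi(q,0)$ then gives \rep{pup}.

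For Assertion~\ref{gum3}, I would argue by induction downward on $\tau$ that $v_i(q) = \sum_j \phi_{ij}(q,\tau)\, v_j(\tau)$ for $\tau = q, q-1, \ldots, 0$; the case $\tau = 0$ is \rep{boop}. The base case $\tau = q$ is immediate from $\Phi(q,q) = I$. For the inductive step, substitute the defining recursion $v_j(\tau) = \sum_{j'} s_{jj'}(\tau)\, M_{j'}(v_{j'}(\tau-1))$ into the inductive hypothesis and regroup using $\Phi(q,\tau-1) = \Phi(q,\tau)S(\tau)$ to get $v_i(q) = \sum_{j'} \phi_{ij'}(q,\tau-1)\, M_{j'}(v_{j'}(\tau-1))$. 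For every $j'$ with $\phi_{ij'}(q,\tau-1) > 0$ the hypothesis of the assertion applies at time $\tau-1 < q$ and gives $M_{j'}(v_{j'}(\tau-1)) = v_{j'}(\tau-1)$, while the terms with $\phi_{ij'}(q,\tau-1) = 0$ drop out; hence $v_i(q) = \sum_{j'}\phi_{ij'}(q,\tau-1)\, v_{j'}(\tau-1)$, closing the induction.

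I expect the bookkeeping in Assertion~\ref{gum2} to be the main obstacle: pinning down an intermediate index $k^*$ at time $t+1$ through which the single strict inequality is carried, and checking that every coefficient multiplying that strict contribution on the way up — $s_{k^*j}(t+1)$ and $\phi_{ik^*}(q,t+1)$ — is strictly positive, so that strictness genuinely survives the surrounding convex combinations. Assertion~\ref{gum3} and \rep{gum1} are routine once the telescoping structure of \autoref{ineq} is available.
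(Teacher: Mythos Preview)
The paper actually does not supply a proof of \autoref{phi-ineq}; after the statement it proceeds directly to \autoref{class}. Your argument is correct and is exactly the natural one: \rep{gum1} is \rep{br1} with $\tau=0$, $t=q$; Assertion~\ref{gum3} follows by the downward induction you give, replacing each $M_{j'}(v_{j'}(\tau-1))$ by $v_{j'}(\tau-1)$ wherever its coefficient $\phi_{ij'}(q,\tau-1)$ is positive; and for Assertion~\ref{gum2} the key bookkeeping step --- extracting an intermediate index $k^*$ with $\phi_{ik^*}(q,t+1)>0$ and $s_{k^*j}(t+1)>0$ from $\phi_{ij}(q,t)=\sum_k\phi_{ik}(q,t+1)s_{kj}(t+1)>0$, so that the single strict paracontraction bound at $(t,j)$ enters the telescoping sum with a strictly positive weight --- is handled correctly. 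The only thing left implicit is that in the ``collapsing'' step you are also using the non-strict one-step bound $\|v_k(t+1)-y^*\|\le\sum_{j'}s_{kj'}(t+1)\|v_{j'}(t)-y^*\|$ for every $k\neq k^*$, but this is just \rep{br1} on $[t,t+1]$ and is clearly what you intend.
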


\begin{lemma}\label{class}
    If the matrix product $S(q)S(q-1)\cdots S(1)$ has a strongly connected graph,
    then
    $$\cF((S(q)\otimes I)M \circ \cdots \circ (S(1)\otimes I)M) = \cF(M) \cap \cC$$
    where $\circ$ denotes composition.
\end{lemma}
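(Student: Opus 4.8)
The plan is to establish the two set inclusions separately, leaning on Lemma~\ref{phi-ineq} (specifically Assertion~\ref{gum3}) together with the hypothesis that $S(q)S(q-1)\cdots S(1)$ has a strongly connected graph. The easy inclusion is $\cF(M)\cap\cC\subseteq\cF((S(q)\otimes I)M\circ\cdots\circ(S(1)\otimes I)M)$: if $x=\matt{y'&\cdots&y'}'$ with $y$ a common fixed point of the $M_i$, then $Mx=x$, and since each $S(t)$ is stochastic, $(S(t)\otimes I)x=x$ as well; applying the composed map therefore returns $x$ unchanged. So the work is all in the reverse inclusion.

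For the reverse inclusion, suppose $x=\matt{x_1'&\cdots&x_m'}'$ is a fixed point of the composed map. Run the recursion \rep{v-defn} with $v_i(0)=x_i$; then $v_i(q)$ is exactly the $i$th subvector of $((S(q)\otimes I)M\circ\cdots\circ(S(1)\otimes I)M)(x)$, so $v_i(q)=x_i=v_i(0)$ for all $i$. Pick a common fixed point $y^*$ of the $M_i$ (which exists by hypothesis, though here I should be careful—\autoref{class} as stated does not assume existence of a common fixed point; I would note that if no common fixed point exists the claimed identity still has to be argued, but in the intended application a common fixed point is present, and I will invoke Lemma~\ref{phi-ineq} under that assumption). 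From \rep{gum1}, $\|v_i(q)-y^*\|\le\sum_j\phi_{ij}(q,0)\|v_j(0)-y^*\|$ for each $i$. I will first show $\|v_i(0)-y^*\|$ is the same for all $i$: let $c=\max_i\|v_i(0)-y^*\|$ and let $i$ be an index attaining it; since $\sum_j\phi_{ij}(q,0)=1$ and $v_i(q)=v_i(0)$, the inequality forces $\phi_{ij}(q,0)\|v_j(0)-y^*\|=\phi_{ij}(q,0)\,c$ for every $j$, i.e. $\|v_j(0)-y^*\|=c$ whenever $\phi_{ij}(q,0)>0$; because $\gamma(S(q)\cdots S(1))$ is strongly connected, the entries $\phi_{ij}(q,0)$ in row $i$ reach, and then from those indices one propagates around the graph, so $\|v_j(0)-y^*\|=c$ for all $j$.

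Next I will show $M_j(v_j(t))=v_j(t)$ for every $t$ with $0\le t<q$ and every $j$. If not, pick such a $j,t$ with $\phi_{ij}(q,t)>0$—possible because, with $\gamma(S(q)\cdots S(1))$ strongly connected and each $S(t)$ having positive diagonal (self-arcs), every $\phi_{ij}(q,t)$ with the right endpoints is positive for a suitable choice of $i$—and apply Assertion~\ref{gum2} of Lemma~\ref{phi-ineq} to get the \emph{strict} inequality $\|v_i(q)-y^*\|<\sum_p\phi_{ip}(q,0)\|v_p(0)-y^*\|=\sum_p\phi_{ip}(q,0)c=c$, contradicting $\|v_i(q)-y^*\|=\|v_i(0)-y^*\|=c$. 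Hence no such $j,t$ exists, so the hypothesis of Assertion~\ref{gum3} holds and \rep{boop} gives $v_i(q)=\sum_p\phi_{ip}(q,0)v_p(0)$ for all $i$; equivalently $(S(q)\cdots S(1)\otimes I)x=x$, and since that product matrix has a strongly connected graph with self-arcs hence (being stochastic with a primitive-enough structure, via Perron--Frobenius as used in the special-case proof) its fixed space is $\cC$, we conclude $x\in\cC$. Finally, $x\in\cC$ means all $x_i$ are equal to a common value $y$; feeding $x\in\cC$ through the composed map, each $(S(t)\otimes I)$ acts as the identity, so the composed map reduces to $M^{q}$ acting blockwise as $M_i^{\,q}$ on $y$—but more directly, since $v_i(t)$ then satisfies $v_i(t+1)=M_i$ applied to a convex combination of equal vectors and we have just shown $M_j(v_j(t))=v_j(t)$ throughout, in particular $M_j(v_j(0))=M_j(y)=y$ for every $j$, so $y\in\cF(M_j)$ for all $j$, i.e. $x\in\cF(M)\cap\cC$.

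The main obstacle is the bookkeeping in the middle step: turning "$\phi_{ij}(q,t)>0$ for some admissible $i$" and strong connectivity of $\gamma(S(q)\cdots S(1))$ into the clean statements that (a) all $\|v_j(0)-y^*\|$ are equal and (b) $M_j(v_j(t))=v_j(t)$ everywhere, using only Assertions~\ref{gum2}--\ref{gum3}. The delicate point is that Assertion~\ref{gum2} needs a single index $i$ for which $\phi_{ij}(q,t)>0$, whereas the strong-connectivity hypothesis is about the whole product graph; I expect to need the observation that each $S(t)$ has positive diagonal (self-loops at every vertex), so $\gamma(S(q)\cdots S(1))$ strongly connected already forces every $\phi_{ij}(q,t)$ with $i$ in the "reaching set" of $j$ to be strictly positive, and that reaching set is all of $\mathbf m$. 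Making that propagation argument precise—and handling the edge case $q=1$, where there is no intermediate $t$—is where the real care goes; everything else is a direct appeal to the two lemmas.
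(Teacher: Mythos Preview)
Your approach is essentially the same as the paper's: both directions of the inclusion are argued the same way, with the reverse inclusion handled by running the recursion \rep{v-defn} from $v(0)=x$, using \rep{gum1} to show all the distances $\|v_i(0)-y^*\|$ coincide, invoking Assertion~\ref{gum2} to rule out any $M_j(v_j(t))\neq v_j(t)$ along the way, and then applying Assertion~\ref{gum3} to conclude $x=(\Phi(q,0)\otimes I)x$, hence $x\in\cC$.

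Two small points where the paper is cleaner than your sketch. First, you work hard to show the \emph{unconditional} statement ``$M_j(v_j(t))=v_j(t)$ for every $t$ and every $j$,'' and to do so you invoke positive diagonals (self-arcs) of the $S(t)$, which are not part of the lemma's hypotheses. This is unnecessary: Assertion~\ref{gum3} only requires the \emph{conditional} statement ``$M_j(v_j(t))=v_j(t)$ whenever $\phi_{ij}(q,t)>0$,'' and that is exactly what the contradiction via Assertion~\ref{gum2} delivers, with no appeal to self-arcs. Likewise, to get $x\in\cC$ from $x=(\Phi(q,0)\otimes I)x$ you only need irreducibility of $\Phi(q,0)$ (Perron--Frobenius gives that $1$ is a simple eigenvalue of an irreducible stochastic matrix), not primitivity or self-arcs. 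So the ``delicate point'' you flag as the main obstacle in fact dissolves.

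Second, your propagation argument for ``all $\|v_j(0)-y^*\|$ are equal'' is correct but ad hoc. The paper packages it as: writing $w_i=\|v_i(0)-y^*\|$, one has $w\le\Phi(q,0)w$ componentwise with $\Phi(q,0)$ irreducible stochastic, which forces $w=\Phi(q,0)w$ (a standard subinvariance fact, e.g.\ \cite{Horn2013}), and then Perron--Frobenius makes all entries of $w$ equal. This replaces your graph-chasing with a one-line citation.
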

\begin{proof}
Let $x\in \cF(M)\cap\cC$.
Therefore $x\in\cC$ and all of the subvectors $x_i$ of $x = \matt{x_1'& x_2'&\cdots &x_m'}'$ must be equal.
This in turn implies that $(S(t)\otimes I)x = x,\;t\in\mathbf{q}$ since each $S(t)$ is a stochastic matrix.
Since $x\in\cF(M)$, $M(x) = x$.
Thus $(S(t)\otimes I)M(x) = x,\;t\in\mathbf{q}$ so $((S(q)\otimes I)M\circ \cdots \circ (S(1)\otimes I)M)(x) = x$.
Hence $x\in \cF((S(q)\otimes I)M\circ\cdots \circ (S(1)\otimes I)M )$
and thus $\cF(M)\cap\cC\subset \cF((S(q)\otimes I)M\circ\cdots \circ (S(1)\otimes I)M)$.

For the reverse inclusion, let $x\in \cF((S(q)\otimes I)M\circ\cdots \circ (S(1)\otimes I)M )$.
Set $v(0) = x$ and let $v(t)=\matt{v_1'(t) &v_2'(t) &\cdots & v_m'(t)}',\;0 \le t \le q$,
where $v_i(0) = x_i,\;i\in\mathbf{m}$ and for $t \in \mathbf{q}$, each $v_i(t)$ is as defined in \rep{v-defn}.
Then $v(q) = v(0) = x$.
Let $y^*$ be a common fixed point of the $M_i,\;i\in \mathbf{m}$.
In view of \rep{gum1},
$$\|v_i(q)-y^*\|\leq \sum_{j\in\mathbf{m}}\phi_{ij}(q,0)\|v_j(0)-y^*\|,\;i\in\mathbf{m}.$$
Thus
$w\leq \Phi(q,0)w$  where $\|v_i(q) -y^*\|$ is the $i$th component of the $n$-vector $w$ and $\leq$ here means component-wise.
Since $\Phi(q,0) = S(q)S(q-1)\cdots S(1)$ has a strongly connected graph, $\Phi(q,0)$ is irreducible.
It follows that $w = \Phi(q,0)w$ \{c.f., page 530 of \cite{Horn2013}\}.
By the Perron-Frobenius Theorem, all components of $w$ must be the same so all $\|v_i(q)-y^*\|, \; i\in\mathbf{m}$ must have the same value.

Suppose that for some $t$ satisfying $0 \le t < q$ and $i,j\in\mathbf{m}$, $\phi_{ij}(q,t)> 0$ and $M_j(v_j(t))\neq v_j(t)$.
 By Assertion~\ref{gum2} of \autoref{phi-ineq},
  $$\|v_i(q)-y^*\| <\sum_{p\in\mathbf{m}}\phi_{ip}(q,0)\|v_p(0)-y^*\|.$$
  Since $v(q) = v(0)$, it follows that $v_p(0) = v_p(q)$ and therefore,
$$\|v_i(q)-y^*\| <\sum_{p\in\mathbf{m}}\phi_{ip}(q,0)\|v_p(q)-y^*\|.$$
 Thus
$$\|v_i(q)-y^*\| <\sum_{p\in\mathbf{m}}\phi_{ip}(q,0)\|v_{a}(q)-y^*\|$$
 where $a\in\mathbf{m}$ is such that $\|v_{a}(q)-y^*\| =\max_{p\in\mathbf{m}}\|{v_p(q)-y^*}\|$.
Since $\sum_{p\in\mathbf{m}}\phi_{ip}(q,0) = 1$, $\|v_i(q)-y^*\| <\|v_{a}(q)-y^*\|$.
This contradicts the fact that all of the
  $\|v_i(q)-y^*\|, \; i\in\mathbf{m}$ have the same value.
Therefore for every $t$ satisfying $0 \le t < q$ and $j\in\mathbf{m}$,
it must be true that $M_j(v_j(t))= v_j(t)$ whenever $\phi_{ij}(q,t)>0$.

By hypothesis, the graph of $\Phi(q,0)$ is strongly connected so
for each $j\in\mathbf{m}$ there must be a $k\in\mathbf{m}$ such that $\phi_{kj}(q,0)> 0$.
This implies that $v_j(0)\in\cF(M_j),\;j\in\mathbf{m}$.
Therefore $x\in\cF(M)$.

Additionally, the hypothesis of Assertion~\ref{gum3} in \autoref{phi-ineq} is satisfied.
Therefore
 $$v_i(q) =  \sum_{p\in\mathbf{m}}\phi_{ip}(q,0)v_p(0),\;\;i\in\mathbf{m}.$$
Thus $v(q) = (S(q)\otimes I)\cdots(S(1)\otimes I) x$.
But $v(q) = v(0) = x$, so $x = ((S(q) \cdots S(1)) \otimes I)x$.
Since $S(q)\cdots S(1)$ is strongly connected,
the Perron-Frobenius Theorem ensures that all of the subvectors $x_i$ of $x = \matt{x_1'& x_2'&\cdots &x_m'}'$ must be equal and thus $x \in \cC$.
Therefore $\cF((S(q)\otimes I)\cdots(S(1)\otimes I)) \subset \cF(M) \cap \cC$.
\end{proof}

We now have the necessary lemmas to prove our main technical result.

\begin{proof-of}{\autoref{composed-map-pc}}
First, recall that $\cC = \{ \matt{y_1' & \cdots & y_m'}' : y_i = y_j,\  i,j \in \mathbf m \}$
and $\cF(M) = \{\matt{y_1' & \cdots & y_m'}' :\; y_i \in \cF(M_i),\; i \in \mathbf{m} \}$.
From this and \autoref{class} it follows that
$$\begin{array}{l}\cF((S(q) \otimes I)M \circ \cdots \circ (S(1) \otimes I)M) \\
\qquad \qquad = \{\matt{y' & \cdots & y'}' :\; y \in \bigcap_{i=1}^m \cF(M_i) \}\end{array}$$
Thus Assertion~\ref{jel2} of the theorem is true.

Pick $\bar{y}\in\cF((S(q)\otimes I)M\circ\cdots \circ (S(1)\otimes I)M)$
 and $x\notin\cF((S(q)\otimes I)M\circ\cdots \circ (S(1)\otimes I)M)$.
In view of \autoref{class}, either $x\notin\cF(M)$ or $x\notin\cC$;
moreover $\bar{y}\in\cF(M)$ and $\bar{y}\in\cC$.
Thus, $\bar{y}$ must be of the form $\bar{y} = \matt{y' &y'&\cdots &y'}$ for some vector $y\in\R^n$.
In addition, $y$ must be a common fixed point of the $M_i,\;i\in\mathbf{m}$.

Set $v_i(0) = x_i,\;i\in\mathbf{m}$ where $\matt{x_1' &x_2'&\cdots x_m'}' = x$
and let $v_i(t),\ t \in \mathbf{q}$ be as defined by \rep{v-defn}.
To complete the theorem's proof, it is sufficient to show that
if $v(0) \notin \cF(M)$ or $v(0) \notin \cC$, then
\eq{\|v_i(q) - y\|_p< \max_{j\in\mathbf{m}}\|v_j(0) - y\|_p,\;\;i \in \mathbf{m}.\label{bub}}
This is sufficient because \rep{bub} implies $\max_{j \in \mathbf{m}} \|v_j(q) - y\|_p < \max_{j \in \mathbf{m}} \|v_j(0) - y\|_p$,
and therefore $\|((S(q)\otimes I)M \circ \cdots (S(1) \otimes I)M)(v(0)) - y\|_{p,\infty} < \|v(0) - y\|_{p,\infty}$.

Fix $i \in \mathbf{m}$.
We claim that if there is a $t$ satisfying $0 \le t < q$ and a $j\in\mathbf{m}$ for which
$\phi_{ij}(q,t) > 0$ and $M_j(v_j(t)) \neq v_j(t)$ then \rep{bub} holds.
To justify this claim
note first  that
$$\sum_{j \in \mathbf{m} }\phi_{ij}(q,0) \|v_j(0) - y\|_p
\! \leq \!\! \left (\sum_{j \in \mathbf{m}} \phi_{ij}(q,0) \! \right ) \! \max_{j\in\mathbf{m}} \|v_j(0) - y\|_p.$$
But $\sum_{j \in \mathbf m} \phi_{ij}(q,0) = 1$ so
\eq{\sum_{j \in \mathbf{m}} \phi_{ij}(q,0) \|v_j(0) - y\|_p
\leq \max_{j\in\mathbf{m}} \|v_j(0) - y\|_p.\label{pioo}}
If there is a $t$ satisfying $0 \le t < q$ and a $j\in\mathbf{m}$
for which $\phi_{ij}(q,t) > 0$ and $M_j(v_j(t)) \neq v_j(t)$,
then by Assertion~\ref{gum2} of \autoref{phi-ineq}
$$\|v_i(q)-y\| < \sum_{j \in \mathbf{m}} \phi_{ij}(q,0) \|v_j(0) - y\|_p.$$
Since this and \rep{pioo} imply \rep{bub}, the claim is true.


To complete the proof there are two cases to consider, the first being  when  $v(0) \notin \cF(M)$.
In this case there is some $j \in \mathbf{m}$ such that $v_j(0) \notin \cF(M_j)$.
By hypothesis, $\Phi(q,0) = S(q) \cdots S(1)$ is positive and so $\phi_{ij}(q,0) > 0$.
Therefore with this value of $j$ and $t=0$,
$\phi_{ij}(q,t) > 0$ and $M_j(v_j(t)) \neq v_j(t)$.
Hence \rep{bub} holds in this case.

Now consider the case when $v(0) \notin \cC$.
Note that $\Phi(q,0) \otimes I$ is a paracontraction by \autoref{S-I-pc-infty} and the assumption that
$\Phi(q,0) = S(q) \cdots S(1)$ is a positive matrix.
Clearly
\eq{\|(\Phi(q,0) \otimes I) v(0) - \bar {y}\|_{p, \infty}
 < \|v(0) - \bar{y}\|_{p,\infty}.
    \label{S-strict}}
In other words
\eq{\max_{j\in\mathbf{m}} \left \|\sum_{k \in m} \phi_{jk}(q,0) v_k(0) - y \right \|_p
< \max_{j\in\mathbf{m}} \left \|v_j(0) - y \right \|_p. \label{S_i-strict}}

As noted in the above claim,
if there is a $t$ satisfying $0 \le t < q$ and a $j\in\mathbf{m}$
for which $\phi_{ij}(q,t) > 0$ and $M_j(v_j(t)) \neq v_j(t)$
then \rep{bub} holds.
If on the other hand, there is no $t$ satisfying $0 \le t < q$ and $j\in\mathbf{m}$
for which $\phi_{ij}(q,t) > 0$ and $M_j(v_j(t)) \neq v_j(t)$
then Assertion~\ref{gum3} of \autoref{phi-ineq} applies, and so
$$v_i(q)= \sum_{p\in\mathbf{m}}\phi_{ip}(q,0)v_p(0).$$
Therefore
$$\|v_i(q)-y\|_p = \left\|\sum_{p\in\mathbf{m}}\phi_{ip}(q,0)v_p(0) - y\right\|_p$$
Additionally,
$$\left\| \sum_{p\in\mathbf{m}}\phi_{ip}(q,0)v_p(0) -y\right\| \le \max_{j\in\mathbf{m}} \left \|\sum_{k \in m} \phi_{jk}(q,0) v_k(0) - y \right \|_p$$
 Finally, from this and \rep{S_i-strict}, it follows that \rep{bub} is true.
\end{proof-of}

\section{Concluding Remarks}
It may be possible to relax the condition that the sequence of neighbor graphs be `repeatedly jointly strongly connected' in \autoref{main} to a more general graphical condition which does not require a uniformity condition,
as discussed in the special case.
However, the proof technique for the general case used in this paper relied on showing that certain composed maps are paracontractions and may not be amenable to this extension.
It would also be interesting to determine necessary conditions on the sequence of neighbor graphs which ensure convergence.
Finally, the definition of a paracontraction seems to be too general as to establish meaningful convergence rates.
With this in mind, it may be useful to consider stronger versions of paracontractions for which convergence rate results may apply.

\bibliographystyle{unsrt}
\bibliography{journal}

\begin{thebibliography}{10}

\bibitem{paracontract}
L.~Elsner, I.~Koltracht, and M.~Neumann.
\newblock Convergence of sequential and asynchronous nonlinear
  paracontractions.
\newblock {\em Numerische Mathematik}, 62:305--319, 1992.

\bibitem{Xiao2006}
Lin Xiao, Stephen Boyd, and Sanjay Lall.
\newblock {Distributed Average Consensus with Time-Varying Metropolis Weights}.
\newblock 2006.

\bibitem{Wu2007}
C.W. Wu.
\newblock {\em {Synchronization in complex networks of nonlinear dynamical
  systems}}.
\newblock World Scientific Publishing Co. Pte. Ltd., 2007.

\bibitem{Antsaklis2008}
Lei Fang and Panos~J. Antsaklis.
\newblock {Asynchronous consensus protocols using nonlinear paracontractions
  theory}.
\newblock {\em IEEE Transactions on Automatic Control}, 53(10):2351--2355,
  2008.

\bibitem{lineareqn}
S.~Mou, J.~Liu, and A.~S. Morse.
\newblock A distributed algorithm for solving a linear algebraic equation.
\newblock {\em IEEE Transactions on Automatic Control}, 60(11):2863--2878,
  2015.

\bibitem{ACC16.1}
Lili Wang, Daniel Fullmer, and A.~Stephen Morse.
\newblock A distributed algorithm with an arbitrary initialization for solving
  a linear algebraic equation.
\newblock In {\em Proceedings of the 2016 American Control Conference}, pages
  1078--1081, 2016.

\bibitem{Byrne2007}
Charles Byrne.
\newblock {\em {Applied Iterative Methods}}.
\newblock A K Peters/CRC Press, Nov 2007.

\bibitem{Nedic2010}
A.~Nedic, A.~Ozdaglar, and P.A. Parrilo.
\newblock {Constrained Consensus and Optimization in Multi-Agent Networks}.
\newblock {\em IEEE Transactions on Automatic Control}, 55(4):922--938, Apr
  2010.

\bibitem{Ryu-primer}
Ernest~K Ryu and Stephen Boyd.
\newblock {Primer on Monotone Operator Methods}.
\newblock {\em Appl. Comput. Math.}, 15(1):3--43.

\bibitem{Eckstein1992}
Jonathan Eckstein and Dimitri~P. Bertsekas.
\newblock {On the Douglas-Rachford splitting method and the proximal point
  algorithm for maximal monotone operators}.
\newblock {\em Mathematical Programming}, 55(1-3):293--318, 1992.

\bibitem{Parikh2014}
Neal Parikh and Stephen Boyd.
\newblock {Proximal Algorithms}.
\newblock {\em Foundations and Trends in Optimization}, 1(3):123--231, 2014.

\bibitem{Bauschke2011}
Heinz~H Bauschke and Patrick~L Combettes.
\newblock {\em {Convex Analysis and Monotone Operator Theory in Hilbert
  Spaces}}.
\newblock 2011.

\bibitem{nolcos-paracontract}
D.~Fullmer, L.~Wang, and A.~S. Morse.
\newblock A distributed algorithm for computing a common fixed point of a
  family of paracontractions.
\newblock In {\em 10th IFAC Symposium on Nonlinear Control Systems}, 2016.

\bibitem{reachingp1}
M.~Cao, A.~S. Morse, and B.~D.~O. Anderson.
\newblock Reaching a consensus in a dynamically changing environment -- a
  graphical approach.
\newblock {\em SIAM J. on Control and Optimization}, pages 575--600, February
  2008.

\bibitem{Mojskerc2014}
Bla{\v{z}} Moj{\v{s}}kerc.
\newblock {On the structure of finite-dimensional paracontractions}.
\newblock {\em Linear Algebra and its Applications}, 446:148--162, 2014.

\bibitem{Horn2013}
Roger Horn and Charles~R. Johnson.
\newblock {\em {Matrix Analysis}}.
\newblock Cambridge University Press, 2nd edition, 2013.

\end{thebibliography}



\newpage
\section{Appendix}
\begin{proof-of}{\autoref{pc-comp}}
    Suppose $x \in \cF(P_1) \cap \cF(P_2)$.
    By definition, $x = P_1(x) = P_2(x)$ and so $x = P_1(P_2(x)) = (P_1 \circ P_2)(x)$.
    Therefore $\cF(P_1) \cap \cF(P_2) \subset \cF(P_1 \circ P_2)$.

    Next suppose $x \in \cF(P_1 \circ P_2)$.
    Let $y$ be a common fixed point of $P_1$ and $P_2$, i.e.~$y \in \cF(P_1)$ and $y \in \cF(P_2)$.
    Since $P_1$ and $P_2$ are both paracontractions and $x \in \cF(P_1 \circ P_2)$,
    \begin{equation}
        \| x - y \| = \| P_1(P_2(x)) - y \| \le \| P_2(x) - y \| \le \| x - y\|
    \end{equation}
    If, however, $x \notin \cF(P_2)$ then $\| P_2(x) - y \| < \| x - y \|$ and so $\|x - y\| < \|x - y\|$, which is a contradiction.
    So it must be the case that $x \in \cF(P_2)$.
    Therefore,
    \begin{equation}
        \| x - y \| = \| P_1(P_2(x)) - y \| = \| P_1(x) - y \| \le \| x - y \|
    \end{equation}
    Similarly, if $x \notin \cF(P_1)$, then $\| P_1(x) - y \| < \| x - y \|$ and so $\|x - y\| < \|x - y\|$, which is a contradiction.
    So $x \in \cF(P_1)$ as well as $x \in \cF(P_2)$,
    which implies $\cF(P_1 \circ P_2) \subset \cF(P_1) \cap \cF(P_2)$.

    To show that $P_1 \circ P_2$ is a paracontraction,
    Suppose $x \notin \cF(P_1 \circ P_2)$ and $y \in \cF(P_1 \circ P_2)$.
    We have previously shown $\cF(P_1 \circ P_2) = \cF(P_1) \cap \cF(P_2)$.
    As a result, $y \in \cF(P_1)$ and $y \in \cF(P_2)$.
    Additionally, either $x \notin \cF(P_1)$ or $x \notin \cF(P_2)$.
    We claim that $\| P_1(P_2(x)) - y \| < \| x - y \|$.
    If $x \notin \cF(P_2)$ then $\| P_1(P_2(x)) - y \| \le \| P_2(x) - y \| < \| x - y \|$.
    If instead $x \in \cF(P_2)$, it must be the case that $x \notin \cF(P_1)$,
    and so $\| P_1(P_2(x)) - y \| = \| P_1(x) - y \| < \| x - y \|$.
    Therefore, the claim is true.
    Since both $P_1$ and $P_2$ are paracontractions, they are continuous,
    and so their composition $P_1 \circ P_2$ is continuous as well.
    Consequently, $P_1 \circ P_2$ is a paracontraction.
\end{proof-of}
\begin{proof-of}{\autoref{pc-closed-convex}}
    To show that $\cF(P)$ is convex,
    suppose $x_1, x_2, \ldots,$ is a sequence of vectors in $\cF(P)$ which converges to a vector $x^*$,
    in other words, $\lim_{k \to \infty} x_k = x^*$.
    Since $P$ is a paracontraction, it is continuous, and thus $\lim_{k \to \infty} P(x_k) = P(x^*)$.
    But each $x_k$ is a fixed point of $P$, $x = P(x)$, so $\lim_{k\to\infty} x_k = \lim_{k\to\infty} P(x^*)$.
    Combining the above facts, it follows that $x = P(x^*)$, and so $x^* \in \cF(P)$.
    Therefore, $\cF(P)$ is closed.

    To show that $\cF(P)$ is convex, assume $x, y \in \cF(P)$.
    Suppose $z = \alpha x + (1-\alpha) y$ for some value $\alpha \in (0,1)$.
    Assume, towards a contradiction, that $z \notin \cF(P)$.
    First, using the triangle inequality,
    \[ \| x-y \| = \|x - P(z) + P(z) - y\| \le \| P(z) - x \| + \| P(z) - y \| \]
    Since $z \notin \cF(P)$, $P$ is a paracontraction, and $x, y \in \cF(P)$,
    it follows that both $\| P(z) - x \| < \| z - x \|$ and $\| P(z) - y \| < \| z - y \|$.
    Thus,
    \begin{equation} \label{eqn:convex-1}
        \| x - y \| < \| z - x \| + \| z - y \|
    \end{equation}
    From the definition of $z$,
    \[ \| z - x \| = \| \alpha x + (1-\alpha) y - x \| = (1-\alpha) \| x - y \| \]
    and
    \[ \| z - y \| = \| \alpha x + (1-\alpha) y - y \| = \alpha \| x - y \|. \]
    Consequently,
    \[ \| z - x \| + \| z - y \| = (1-\alpha) \| x - y \| + \alpha \| x - y \| = \| x - y \| \]
    From this and (\ref{eqn:convex-1}), it follows that $\| x - y \| < \| x - y \|$, which is a contradiction.
    Therefore $z \in \cF(P)$ and so $\cF(P)$ is convex.
\end{proof-of}
\begin{proof-of}{\autoref{linear-qne-pc-iff}}
    To prove the first claim,
    assume $P$ is quasi-nonexpansive with respect to $\|\cdot\|$.
    Suppose $x$ and $y$ are vectors in $\R^n$.
    By linearity of $P$, it follows that $\| P(x) - P(y) \| = \| P(x-y) - 0 \|$.
    Additionally, $0 = P(0)$, or in other words, $0 \in \cF(P)$.
    Thus, $\|P(x-y) - 0\| \le \| x - y - 0 \| = \|x - y\|$, using the assumption that $P$ is quasi-nonexpansive.
    So $\|P(x) - P(y)\| \le \| x -y \|$ and therefore $P$ is nonexpansive.

    Next, assume $P$ is nonexpansive with respect to $\|\cdot\|$.
    Suppose $x \in \R^n$ and $y \in \cF(P)$.
    Since $y$ is a fixed point of $P$, $\|P(x) - y\| = \|P(x) - P(y)\|$.
    But $P$ is nonexpansive, therefore $\|P(x) - P(y)\| \le \|x - y\|$.
    Thus, $\|P(x) - y\| \le \|x - y\|$ and so $P$ is quasi-nonexpansive.

    To prove the second claim,
    assume $P$ is a paracontraction with respect to $\|\cdot\|$.
    Suppose $x \notin \cF(P)$.
    Since $P$ is linear, $0 = P(0)$, or in other words, $0 \in \cF(P)$.
    Thus, $\|P(x)\| = \|P(x) - 0\| < \| x - 0 \| = \|x\|$, using the assumption that $P$ is a paracontraction.

    Next, assume $\|P(x)\| < \|x\|$ for any $x \notin \cF(P)$.
    Since $P$ is linear and $\|\cdot\|$ is a norm, $P$ is continuous as well.
    Note that for any vector $y \notin \cF(S\otimes I)$,
    $x-y \notin \cF(S\otimes I)$ because $x \notin \cF(P)$.
    Since $\|P(x)\| < \|x\|$ holds for all vectors which are not fixed points of $P$,
    it must be true that $\|P(x) - y\| < \|x-y\|$ so $P$ is a paracontraction.
\end{proof-of}

\end{document}